\pgfplotsset{compat=1.11}
\theoremstyle{plain}% Theorem-like structures provided by amsthm.sty
\newtheorem{theorem}{Theorem}
\newtheorem{proposition}{Proposition}
\newtheorem{lemma}{Lemma}
\newtheorem{corollary}{Corollary}
\theoremstyle{definition}
\newtheorem{definition}{Definition}
\newtheorem{remark}{Remark}
\newtheorem{example}{Example}
\providecommand{\R}{\ensuremath{\mathbb{R}}}
\providecommand{\C}{\ensuremath{\mathcal{C}}}	%set of copulas
\providecommand{\SI}[1]{\mathcal{S}^{#1}}		%unit simplex in R^#1
\providecommand{\boldm}[1]{\bm{#1}}						%Bold text	
\providecommand{\rbraces}[1]{\left( #1 \right)} 		%(...)
\providecommand{\curly}[1]{\left\{ #1 \right\}} 		%{...}
\providecommand{\abs}[1]{\left\lvert #1 \right\rvert} 	%|...|
\providecommand{\norm}[1]{\left\lVert #1 \right\rVert} 	%||...||
\newcommand{\set}[2]{\left\{ #1 \; \left \arrowvert \; #2 \right. \right\}} %{... | ... }
\providecommand{\TDLn}{\Lambda}									
\providecommand{\TD}[2]{\Lambda \rbraces{#1 \, ; #2}}		%Lambda(w ; C)
\providecommand{\TDL}[2]{\TD{#1}{#2}}						%Legacy for \TD
\providecommand{\TDLN}[1]{\TDLn \rbraces{#1}}							%Legacy for \TD
\providecommand{\TDLphFo}[1]{\widetilde{#1}}				%Forced: Restriction of Lambda
\providecommand{\TDLph}[1]{\widetilde{#1}}					%Restriction of Lambda
\providecommand{\TDC}{\lambda}								%Tail dependence coefficient
\providecommand{\loc}{\leq_{loc}}
\providecommand{\tdo}{\leq_{tdo}}
\providecommand{\stdo}{<_{tdo}}
\providecommand{\Inv}[1]{#1^{-1}}							%Inverse 
\providecommand{\wc}{{}\cdot{}}
\begin{document}

\begin{frontmatter}

\title{Multivariate tail dependence and local stochastic dominance}

\author[tud]{Karl Friedrich Siburg}\corref{correspondingauthor}%
\cortext[correspondingauthor]{Corresponding author.}%
\author[tud]{Christopher Strothmann\fnref{scholarship}}%
\fntext[scholarship]{Supported by the German Academic Scholarship Foundation.}
\address[tud]{Faculty of Mathematics, TU Dortmund University, Germany}%

\begin{abstract}
Given two multivariate copulas with corresponding tail dependence functions, we investigate the relation between a natural tail dependence ordering $\tdo$ and the order $\loc$ of local stochastic dominance. 
We show that, although the two orderings are not equivalent in general, they coincide for various important classes of copulas, among them all multivariate Archimedean and bivariate lower extreme value copulas. We illustrate the relevance of our results by an implication to risk management.
\end{abstract}

\begin{keyword} %alphabetical order
Archimedean copulas \sep
Dependence modeling \sep
Stochastic ordering \sep
Tail dependence.
\MSC[2020] Primary 62H05 \sep
Secondary 60E15
\end{keyword}

\end{frontmatter}

\section{Introduction and main results}

It is a fundamental task in statistics and real life evaluation to determine and estimate the dependence between random variables, be they the returns of financial assets in a portfolio or the water levels at various rivers or reservoirs. 
In order to separate the marginal distributions from the dependence structure, the concept of copulas has turned out to be most useful. Sklar's fundamental theorem asserts that the cumulative distribution function $F_{\boldm{X}}$ of some random vector $\boldm{X} = (X_1, \ldots, X_d)$ can be represented by a copula $C$ and its univariate marginal distributions $F_1, \ldots, F_d$ as $$ F_{\boldm{X}}(\boldm{x}) = C(F_1(x_1), \ldots, F_d(x_d)) ~, $$ and vice versa; in addition, the copula $C$ is uniquely defined if the $F_j$ are continuous. This implies that, in order to study dependence properties of continuous random variables, one could just as well study stochastic and/or analytical properties of copulas.

A well-known local type of dependence between random variables, which graphically manifests in the lower left corner of the unit cube $[0, 1]^d$, is the so-called tail dependence. More precisely, for a random vector $\boldm{X}$ with continuous margins $F_1,\ldots,F_d$, the (lower) tail dependence coefficient is defined as
\begin{equation*} %\label{eqn:def_tdc}
	\lambda(\boldm{X}) := \lim\limits_{s \searrow 0} \mathbb{P}(X_2 \leq F_{2}^{-1}(s),\ldots,X_d \leq F_{d}^{-1}(s) \mid X_1 \leq F_{1}^{-1}(s))
						= \lim\limits_{s \searrow 0} \frac{C(s,\ldots, s)}{s} =: \lambda(C) ~,
\end{equation*}
whenever this limit exists, and depends only on the underlying copula $C$. $\lambda(\boldm{X})$ was first introduced by \citet{Sibuya.1960} and has since then found wide-spread use in many areas of science, ranging from economics to environmental studies.  
Revisiting the examples from the beginning, $\lambda(\boldm{X})$ describes the probability of extremely large simultaneous losses in a portfolio of stock returns, or the probability of dangerously low water levels at all water reservoirs.
$\lambda(\boldm{X})$ is thus an important indicator for informed investment or policy decisions regarding worst-case scenarios.
Nevertheless, the tail dependence coefficient has some severe drawbacks, which have partly been overcome by using the so-called (lower) tail dependence function %$\TDL{\boldm{w}}{\boldm{X}}$
\begin{equation*} %\label{eqn:def_tdf}
	\TDL{\boldm{w}}{\boldm{X}} := \TDL{\boldm{w}}{C} := \lim\limits_{s \searrow 0} \frac{C(s \boldm{w})}{s}
\end{equation*}
with $\boldm{w} \in [0,\infty)^d$ as the natural generalization of $\lambda(\boldm{X}) = \TDL{\boldm{1}}{\boldm{X}}$.

In order to compare the tail dependence of two different copulas $C_1$ and $C_2$, \citet{Koike.2022} and \citet{Siburg.2022a} simultaneously introduced the tail dependence order $\tdo$ by the pointwise ordering of the corresponding tail dependence functions\footnote{To be precise, the tail dependence order in \citet{Koike.2022} and \citet{Siburg.2022a} was defined just for the bivariate case, but the definition trivially extends to the multivariate situation.}: 
$$ C_1 \tdo C_2 :\hspace*{-0.5ex}\iff \TDL{\wc}{C_1} \leq \TDL{\wc}{C_2} ~. $$ 
Having defined the order $\tdo$, it is natural to ask if it is related to other stochastic orders, most notably the ordering by first-order stochastic dominance via the pointwise ordering of the corresponding copulas. 
Since tail dependence is a local concept, however, this consideration makes only sense locally, i.e., for the local stochastic dominance ordering 
$$ C_1 \loc C_2 :\hspace*{-0.5ex}\iff C_1 \leq C_2 \text{ near } 0 ~. $$ 
It follows right from the definitions that $C_1 \loc C_2$ implies $C_1 \tdo C_2$. 
On the other hand, simple examples show that $C_1 \tdo C_2$ does not imply $C_1 \loc C_2$, because there are copulas which are not ordered pointwise near 0 but whose tail dependence functions coincide. 
Therefore, the question remains whether the strict tail dependence ordering implies local stochastic dominance: $$ C_1 \stdo C_2 \implies C_1 \loc C_2 ~? $$ 
Our first main result in this context is the following (see Section~\ref{section:general_tail_behaviour}):

\begin{center}
\textit{While $C_1 \stdo C_2$ does not necessarily imply $C_1 \loc C_2$, it does imply $C_1 \leq C_2$ on any cone near $\boldm{0}$.}
\end{center}

\noindent In spite of this somewhat negative result, we will show in the second part of our paper (see Section~\ref{section:classes_tail_behaviour}) that 

\begin{center}
\textit{$C_1 \stdo C_2$ implies $C_1 \loc C_2$ for various classes of copulas.}
\end{center}

The paper is organized as follows. Section~\ref{section:preliminaries} collects all necessary preliminaries concerning copulas and tail dependence functions. The two stochastic orders $\tdo$ and $\loc$ are introduced in Section~\ref{section:tdo_order}. Section~\ref{section:general_tail_behaviour} contains the discussion and the proof of our first result. The following Section~\ref{section:classes_tail_behaviour} studies the relation between $\tdo$ and $\loc$ for various classes of copulas and, in particular, provides the proofs of the results mentioned in the second claim above. Finally, in Section~\ref{sec:6}, we indicate an implication of our theory to risk management.

\section{Preliminaries} \label{section:preliminaries}

Copulas are central to modern dependence modelling as a means to separate the influence of the marginal distributions from the dependence governing the relationship between a collection of random variables. 
We will present copulas from an axiomatic point of view utilizing $d$-increasing functions, one possible multivariate generalization of monotonicity.
The $d$-increasing property is known in the literature under various terms such as quasi-monotonicity and, in case of $d=2$, supermodularity. 
In the following, we will abbreviate $$ \R_+ := [0, \infty) ~. $$

\begin{definition}
A function $C: [0, 1]^d \rightarrow [0, 1]$ is called a $d$-copula if it fulfils the following properties:
\begin{enumerate}
	\item $C$ is grounded, i.e.\ $C(\boldm{u}) = 0$ if $u_k = 0$ for some $k$.
	\item $C$ has uniform margins, i.e.\ $C(\boldm{u}) = u_k$ if $u_\ell=1$ for all $\ell \neq k$.  
	\item $C$ is $d$-increasing. 
\end{enumerate}
\end{definition}

The so-called tail dependence quantifies extremal dependence between multiple random variables. 
Introduced by \citet{Sibuya.1960}, the (lower) tail dependence coefficient of a copula $C$ is defined as 
\begin{equation*} %\label{eqn:def_tdc_ref}
	\lambda(C) = \lim\limits_{s \searrow 0} \frac{C(s, \ldots, s)}{s} = \lim\limits_{s \searrow 0} \frac{C(s \boldm{1})}{s} ~, %= \TDL{\boldm{1}}{C_{\boldm{X}}} ~,
\end{equation*}
with $\boldm{1} := (1, \ldots, 1)$ whenever this limit exists. We have the following natural generalization of $\lambda$; see \citet{Joe.1997}.

\begin{definition}  \label{def:tail_dependence_function}
For a $d$-copula $C$, the (lower) tail dependence function $\TDLn: \R_+^d \rightarrow \R_+$ is defined as
\begin{gather*}
		\TDLN{\boldm{w}} := \TDL{\boldm{w}}{C} 	:= \lim\limits_{s \searrow 0} \frac{C(s \boldm{w})}{s}
\end{gather*}
provided that the limit exists for all $\boldm{w} \in \R_+^d$.
\end{definition}

The next theorem reveals how the tail dependence function influences the behaviour of $C$ near 0.

\begin{theorem} \label{thm:concave_quasi_copula}
Suppose $C$ is a $d$-copula and $L: \R_+^d \rightarrow \R_+$ is positive homogeneous of order $1$, i.e.\ $L(s \boldm{w}) = s L(\boldm{w})$ holds for all $s > 0$ and $\boldm{w} \in \R_+^d$. 
Then the following are equivalent:
\begin{enumerate}
    \item The tail dependence function $\TDL{\boldm{w}}{C}$ exists for all $\boldm{w} \in \R_+^d$ and equals $L(\boldm{w})$. 
    \item $L$ is the leading part of a uniform lower tail expansion of $C$, i.e.
    \begin{equation*}
        C(\boldm{u}) = L(\boldm{u}) + R(\boldm{u}) \norm{\boldm{u}}_1 = L(\boldm{u}) + R(\boldm{u})\rbraces{u_1 + \ldots + u_d} ~,
    \end{equation*} 
    where $R: [0,1]^d \rightarrow \R$ is a bounded function fulfilling $R(\boldm{u}) \rightarrow 0$ as $\norm{\boldm{u}}_1 \rightarrow 0$. 
\end{enumerate}
In addition, the tail dependence function $\TDLn$ is concave. 
\end{theorem}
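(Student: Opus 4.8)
The plan is to establish the equivalence first and treat the concavity separately, since the latter only requires the mere existence of $\TDLn$.

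For the implication (2)$\Rightarrow$(1) I would simply insert the postulated expansion into the definition of $\TDL{\boldm{w}}{C}$. Positive homogeneity of $L$ gives $C(s\boldm{w})/s = L(\boldm{w}) + R(s\boldm{w})\norm{\boldm{w}}_1$, and since $\norm{s\boldm{w}}_1 = s\norm{\boldm{w}}_1 \to 0$ as $s\searrow 0$, the remainder vanishes; hence $\TDL{\boldm{w}}{C} = L(\boldm{w})$. This direction is a one-line computation.

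The substance lies in (1)$\Rightarrow$(2). The idea is to define the remainder directly by $R(\boldm{u}) := \rbraces{C(\boldm{u}) - L(\boldm{u})}/\norm{\boldm{u}}_1$ for $\boldm{u}\neq\boldm{0}$ (and $R(\boldm{0}):=0$) and to verify its two properties. Boundedness is immediate from the $1$-Lipschitz estimate $C(\boldm{u}) = C(\boldm{u}) - C(\boldm{0}) \le \norm{\boldm{u}}_1$ valid for copulas, together with $0 \le L(\boldm{u}) \le \norm{\boldm{u}}_1$, which follows from homogeneity and the bound $L \le 1$ on the unit simplex $\Delta := \curly{\boldm{w}\in\R_+^d : \norm{\boldm{w}}_1 = 1}$. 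The decay $R(\boldm{u})\to0$ as $\norm{\boldm{u}}_1\to0$ is the crux: writing $\boldm{u} = s\boldm{w}$ with $s=\norm{\boldm{u}}_1$ and $\boldm{w}\in\Delta$, one has $R(\boldm{u}) = C(s\boldm{w})/s - L(\boldm{w})$, so the claim is exactly that $f_s(\boldm{w}) := C(s\boldm{w})/s$ converges to $L$ \emph{uniformly} on $\Delta$. Assumption (1) only supplies pointwise convergence, and the main obstacle is upgrading it to uniform convergence. I would do this by exploiting that every copula is $1$-Lipschitz in the $\norm{\wc}_1$-norm, so that $\abs{f_s(\boldm{w}) - f_s(\boldm{w}')} \le \norm{\boldm{w}-\boldm{w}'}_1$; thus $\curly{f_s}$ is equi-Lipschitz, hence equicontinuous, and pointwise convergence of an equicontinuous family on the compact set $\Delta$ forces uniform convergence. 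As a by-product $L = \TDLn$ inherits $1$-Lipschitz continuity, which I will need below.

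For concavity I would first pass the supermodularity of $C$ to the limit. As a copula, $C$ is a distribution function, so inclusion--exclusion applied to the events $\curly{\boldm{U}\le\boldm{a}}$ and $\curly{\boldm{U}\le\boldm{b}}$ yields $C(\boldm{a}\vee\boldm{b}) + C(\boldm{a}\wedge\boldm{b}) \ge C(\boldm{a}) + C(\boldm{b})$; dividing by $s$ along $\boldm{a}=s\boldm{w}$, $\boldm{b}=s\boldm{v}$ and letting $s\searrow0$ shows that $\TDLn$ is supermodular. It then remains to prove that a positively homogeneous supermodular function is concave. The mechanism is that Euler's relation $\sum_j w_j\,\partial_j\TDLN{\boldm{w}} = \TDLN{\boldm{w}}$ forces, upon differentiating, $\sum_j w_j\,\partial_i\partial_j\TDLN{\boldm{w}} = 0$, i.e.\ the Hessian $H$ satisfies $H\boldm{w}=0$; combined with $\partial_i\partial_j\TDLN{\boldm{w}}\ge0$ for $i\neq j$ this makes $H$ a negatively signed weighted graph Laplacian, and the algebraic identity $\boldm{x}^\top H\boldm{x} = -\sum_{i<j}\partial_i\partial_j\TDLN{\boldm{w}}\rbraces{\sqrt{w_j/w_i}\,x_i - \sqrt{w_i/w_j}\,x_j}^2 \le 0$ shows $H\preceq0$. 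I expect the remaining difficulty to be the lack of smoothness of $\TDLn$: I would handle it by running this computation on the mixed second derivatives read as the nonnegative measures attached to a supermodular function, or equivalently by a standard regularisation argument, and finally combine the resulting midpoint inequality with the continuity of $\TDLn$ established above to conclude full concavity.
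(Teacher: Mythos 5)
The paper itself offers no proof of this theorem but delegates it to Jaworski (2006, 2010), so the comparison is against the standard argument rather than an in-text one. Your equivalence part is correct and is essentially that standard argument: (2)$\Rightarrow$(1) is the stated one-line computation, and for (1)$\Rightarrow$(2) the decisive step --- upgrading pointwise convergence of $f_s(\boldm{w})=C(s\boldm{w})/s$ to uniform convergence on the compact simplex via the equi-Lipschitz bound $\abs{f_s(\boldm{w})-f_s(\boldm{w}')}\leq\norm{\boldm{w}-\boldm{w}'}_1$ --- is exactly the mechanism behind B-differentiability of Lipschitz functions and is sound. Likewise, passing the supermodularity of $C$ (as a joint distribution function) to the limit to get supermodularity of $\TDLn$ is correct.

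The gap sits in the final step, the claim that a positively homogeneous supermodular function is concave, in the non-smooth case. Your Hessian identity is valid \emph{for $C^2$ functions}: Euler's relation forces $H\boldm{w}=\boldm{0}$, and combined with $H_{ij}\geq 0$ for $i\neq j$ the weighted-Laplacian identity gives $\boldm{x}^\top H\boldm{x}\leq 0$. But $\TDLn$ is merely Lipschitz, and the ``standard regularisation'' you invoke does not go through as written: convolving with an additive mollifier preserves supermodularity but destroys positive homogeneity, and your entire algebraic identity hinges on the Euler relation, which then fails for the mollified function. A workable repair is to mollify multiplicatively, setting $\TDLn_\epsilon(\boldm{w}):=\int \TDLn(w_1e^{t_1},\ldots,w_de^{t_d})\,\rho_\epsilon(\boldm{t})\,\mathrm{d}\boldm{t}$, which preserves homogeneity of order $1$ as well as supermodularity (componentwise increasing reparametrisations commute with $\vee$ and $\wedge$) and is smooth on the open orthant, after which your identity applies and one lets $\epsilon\searrow 0$; alternatively one can run the computation distributionally, using that $\partial_i\partial_j\TDLn$ is a nonnegative measure for $i\neq j$ and that $\sum_j w_j\,\partial_i\partial_j\TDLn=0$ holds in the sense of distributions. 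Either route needs to be spelled out. Note that this is precisely the nontrivial step the paper's own remark delegates to K\"onig's Theorem~2.10 (Choquet's assertion that positively homogeneous supermodular functionals are superadditive, whence concavity is immediate from $\TDLn(t\boldm{v}+(1-t)\boldm{w})\geq\TDLn(t\boldm{v})+\TDLn((1-t)\boldm{w})=t\TDLn(\boldm{v})+(1-t)\TDLn(\boldm{w})$); citing that result would close the gap cleanly.
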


\begin{proof}
A proof can be found in \citet{Jaworski.2006} or \citet{Jaworski.2010b}.
\end{proof}

\begin{remark}
While, apparently, not discussed in the literature, Theorem~\ref{thm:concave_quasi_copula} can actually be extended to hold for quasi-$d$-copulas as well. The first part relies on the theory of B-differentiability discussed in \citet{Scholtes.2012}, and the concavity follows from Theorem~2.10 in \citet{Konig.2003} whenever the quasi-$d$-copula is supermodular in a neighbourhood of $\boldm{0}$.
\end{remark}

Theorem~\ref{thm:concave_quasi_copula} shows that the tail dependence function $\TDL{\wc}{C}$ is the directional derivative of $C$ in $\boldm{0}$.
We point out, however, that $C$ is (Fréchet-) differentiable in $\boldm{0}$ if and only if $\TDL{\wc}{C}$ vanishes identically.

\begin{example} \label{ex:gaussian_tdf}
Suppose $C_{\rho}$ is a Gaussian copula with correlation coefficient $\rho \in [-1, 1]$. 
Then the tail dependence coefficient of $C_{\rho}$ vanishes for every $\rho < 1$, and jumps to 1 when $\rho=1$. 
This makes Gaussian models often unsuitable (and sometimes even dangerous) in the context of financial assets since extreme simultaneous losses are severely underestimated.
\end{example}

Before investigating how the tail dependence function can be used to quantify the extremal behaviour of the underlying random vector, we briefly present some theoretical properties of $\TDLn$. 
Many of these properties are directly linked to the properties of the corresponding copula $C$ and the fact that $\TDLn$ is the directional derivative of $C$ in $\boldm{0}$.

\begin{proposition} \label{prop:tail_dep_func}
A function $\TDLn: \R_+^d \rightarrow \R_+$ is the tail dependence function of a $d$-copula $C$ if and only if
\begin{enumerate}
	\item[a.] $\TDLn$ is bounded from below by $0$ and from above by $\TDLn^+ := \TDL{\wc}{C^+}$ \label{prop:tail_dep_func_bounded}
	\item[b.] $\TDLn$ is $d$-increasing 
	\item[c.] $\TDLn$ is positive homogeneous of order $1$.\label{prop:tail_dep_func_homogeneous}
\end{enumerate}  
Furthermore, every tail dependence function $\TDLn(\wc)$ is Lipschitz continuous; more precisely,
  \begin{equation*}
  	\abs{\TDLn(\boldm{v})	- \TDLn(\boldm{w})}	\leq \sum\limits_{k = 1}^d \abs{v_k - w_k}	~.
  \end{equation*} 
\end{proposition}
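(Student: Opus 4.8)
The plan is to treat the two directions of the equivalence separately and then read off the Lipschitz estimate, the only genuinely hard part being the realization in the ``if'' direction.

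For necessity I would transfer each property from $C$ to $\TDLn$ through the defining limit $\TDLN{\boldm w} = \lim_{s\searrow 0} C(s\boldm w)/s$. Positive homogeneity (c) is immediate from the substitution $r=st$: for $t>0$ one has $\TDLN{t\boldm w} = \lim_{s\searrow 0} C(st\boldm w)/s = t\lim_{r\searrow 0} C(r\boldm w)/r = t\,\TDLN{\boldm w}$. The lower bound in (a) follows from $C\ge 0$, and the upper bound from the Fréchet--Hoeffding inequality $C(\boldm u)\le\min(u_1,\ldots,u_d)=C^+(\boldm u)$: dividing $C(s\boldm w)\le C^+(s\boldm w)$ by $s$ and letting $s\searrow 0$ gives $\TDLN{\boldm w}\le\TDLn^+(\boldm w)$. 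For the $d$-increasing property (b) I would observe that, for any box $[\boldm a,\boldm b]\subseteq\R_+^d$, the $\TDLn$-volume is literally the same fixed alternating sum of vertex values as the corresponding $C$-volume, so that $V_{\TDLn}([\boldm a,\boldm b]) = \lim_{s\searrow 0} s^{-1} V_C([s\boldm a,s\boldm b]) \ge 0$, each scaled $C$-volume being nonnegative.

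The Lipschitz estimate is then cheap: I would invoke the standard fact that every copula is $1$-Lipschitz with respect to the $\ell^1$-norm, i.e.\ $\abs{C(\boldm v)-C(\boldm w)}\le\sum_{k=1}^d\abs{v_k-w_k}$. Applying this to the admissible arguments $s\boldm v$ and $s\boldm w$ for small $s$, dividing by $s$, and passing to the limit yields $\abs{\TDLN{\boldm v}-\TDLN{\boldm w}}\le\sum_{k=1}^d\abs{v_k-w_k}$ at once.

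The real work lies in sufficiency, where I must \emph{produce} a $d$-copula realizing a given $\TDLn$ that satisfies (a)--(c). My route would exploit that, by Theorem~\ref{thm:concave_quasi_copula}, such a $\TDLn$ is concave, and then build an extreme-value copula by reflection. In the bivariate case I would set $\ell(\boldm w):=w_1+w_2-\TDLN{\boldm w}$; conditions (a)--(c) together with concavity show that $\ell$ is a genuine stable tail dependence function (homogeneous of order $1$, with $\ell(w_1,0)=w_1$, $\ell(0,w_2)=w_2$, and $\max(w_1,w_2)\le\ell(\boldm w)\le w_1+w_2$ from $0\le\TDLN{\boldm w}\le\min(w_1,w_2)$, and convex since $\TDLn$ is concave). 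Then $C_{EV}(\boldm u):=\exp(-\ell(-\log u_1,-\log u_2))$ is an extreme-value copula whose survival copula $\hat C_{EV}$ has lower tail dependence function equal to the upper tail dependence function of $C_{EV}$, namely $w_1+w_2-\ell(\boldm w)=\TDLN{\boldm w}$, so $\hat C_{EV}$ realizes $\TDLn$. The hard part is carrying this realization into arbitrary dimension $d$: there the passage between $\TDLn$ and the exponent function $\ell$ is governed by an inclusion--exclusion over all coordinate subsets, and one must check that (a)--(c) force $\exp(-\ell(-\log\,\cdot\,))$ to be a bona fide copula---uniform margins and $d$-increasing---which is considerably more delicate than convexity alone. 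An alternative, dimension-independent route would realize $\TDLn$ by a patchwork copula coinciding with $\TDLn$ on a small box $[0,\varepsilon]^d$ and completed to a copula away from $\boldm 0$; there the obstacle shifts to simultaneously enforcing exact uniform margins and the global $d$-increasing property while leaving the prescribed tail behaviour near $\boldm 0$ intact.
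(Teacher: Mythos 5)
The paper does not prove this proposition at all---it delegates both directions to \citet{Jaworski.2006} and \citet{Jaworski.2010b}---so your attempt cannot be matched against an in-paper argument; it has to stand on its own. The necessity direction and the Lipschitz bound in your write-up do stand on their own: homogeneity via the substitution $r=st$, the bounds from $0\le C\le C^+$, the $d$-increasing property from $V_{\TDLn}([\boldm{a},\boldm{b}])=\lim_{s\searrow 0}s^{-1}V_C([s\boldm{a},s\boldm{b}])\ge 0$ (the alternating vertex sum commutes with the limit), and the $\ell^1$-Lipschitz estimate inherited from the $1$-Lipschitz continuity of $C$ are all correct and complete.

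The sufficiency direction has two genuine problems. First, you justify the concavity of a candidate $\TDLn$ satisfying (a)--(c) by appealing to Theorem~\ref{thm:concave_quasi_copula}; that theorem asserts concavity of functions already known to be tail dependence functions of copulas, which is exactly what you are trying to establish, so the step is circular as written. It is repairable---supermodularity (b) together with positive homogeneity (c) does imply concavity, which is the content of the König result invoked in the remark following Theorem~\ref{thm:concave_quasi_copula}, and in dimension $2$ one can instead quote the Gudendorf--Segers characterization used in Remark~\ref{ex:def_ev}---but the repair has to be made explicit. Second, your realization of $\TDLn$ via the exponent function $\ell(\boldm{w})=w_1+\dots+w_d-\TDLN{\boldm{w}}$ and the survival copula of $\exp(-\ell(-\log u_1,\dots,-\log u_d))$ is only carried out for $d=2$ (where it is correct and coincides with the lower extreme value construction of Proposition~\ref{prop:extreme_value_copula}); for $d>2$ the map $\TDLn\mapsto\ell$ involves an inclusion--exclusion over coordinate subsets and, as you yourself concede, you do not verify that the resulting function is a copula. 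That verification is precisely the nontrivial content of \citet[Prop.~6]{Jaworski.2006}, on which the paper relies, so the multivariate half of the ``if'' direction remains unproven in your proposal.
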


\begin{proof}
For a proof, see \citet{Jaworski.2006} or \citet{Jaworski.2010b}.
\end{proof}

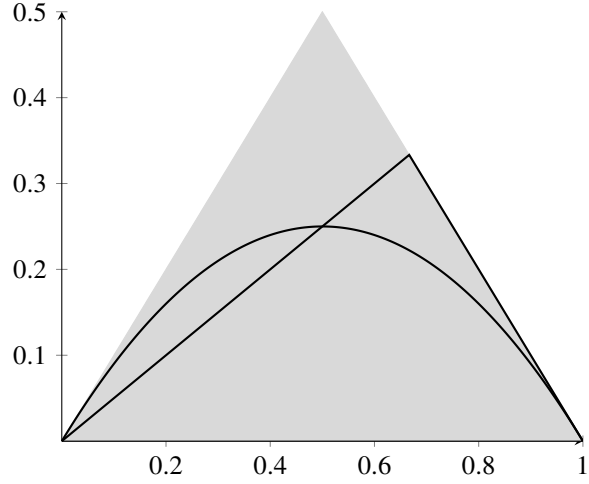
\begin{figure}
\centering
\begin{tikzpicture}[ declare function={ 
    			tdf(\t) 	= min(\t, 1-\t); 
				tdf1(\t)	= \t*(1-\t);
				tdf2(\t) 	= min(0.5*\t, 1-\t);
			}]
        \begin{axis}[axis lines=middle]
            \addplot[name path=f, domain=0:1, samples=151, color=gray!30]{tdf(\x)};
            \addplot[name path=g, domain=0:1, samples=151, color=black, thick]{tdf1(\x)};
            \addplot[name path=h, domain=0:1, samples=151, color=black, thick]{tdf2(\x)};
			\path[draw,name path = xAxis] (axis cs:0,0) -- (axis cs:1,0);
            \addplot[fill=gray!30] fill between [of=xAxis and f]; 
        \end{axis}
\end{tikzpicture} 
\caption{Example of two concave functions (depicted in black) lying in the area between $0$ and $\min \curly{t, 1-t}$ (depicted in grey), thus inducing corresponding bivariate tail dependence functions.}
\label{fig:valid_tdf}
\end{figure}

\begin{remark} \label{ex:def_ev}
Due to the positive homogeneity of the tail dependence function $\TDLn$, we can reduce the domain of $\TDLn$ from $\R_+^d$ to the simplex $\mathcal{S}^{d-1} := \set{\boldm{w} \in \R_+^d}{\norm{\boldm{w}}_1 = 1}$ with no loss of information. In particular, for dimension $d=2$, we may consider $\TDLn$ on the unit simplex $$ \SI{1} := \set{\boldm{w} \in \R_+^2}{\boldm{w} = (t, 1-t) \text{ with } t \geq 0 } $$ and identify $\SI{1}$ with $[0, 1]$ such that $\TDLphFo{\Lambda}(t) := \TDLn(t, 1-t)$
% \begin{equation*}
% 	\TDLphFo{\Lambda}(t) := \TDLn(t, 1-t)
% \end{equation*}
is a univariate function $\TDLph{\Lambda}: [0, 1] \rightarrow [0, 1/2]$ with 
\begin{gather}\label{eqn:bivariate_tdf_characterization}
\begin{aligned} 
	\TDLn(\boldm{w})	= \norm{\boldm{w}}_1 \TDLn \rbraces{\frac{\boldm{w}}{\norm{\boldm{w}}_1}} 
						= (w_1 + w_2) \TDLn \rbraces{\frac{w_1}{w_1 + w_2}, 1-\frac{w_1}{w_1 + w_2}} 
						= (w_1 + w_2) \TDLph{\Lambda} \rbraces{\frac{w_1}{w_1 + w_2}} ~.
\end{aligned}
\end{gather}
From \citet{Gudendorf.2010}, it then follows that $\TDLn$ is a bivariate tail dependence function if and only if  $\TDLph{\Lambda}: [0, 1] \rightarrow [0, 1/2]$ is a concave function fulfilling $0 \leq \TDLph{\Lambda}(t) \leq \min \curly {t, 1-t}$.
In other words, any concave function whose graph lies in the grey area depicted in Figure~\ref{fig:valid_tdf} induces a bivariate tail dependence function via \eqref{eqn:bivariate_tdf_characterization}.
\end{remark}

\begin{example}
For a bivariate copula $C$, the tail dependence coefficient is given by
\begin{equation} \label{eq:tdc_restricted}
	\lambda(C) = 2 \TDLph{\Lambda}\rbraces{\frac{1}{2} \, ; C} .
\end{equation}
\end{example}

\section{Tail dependence order \texorpdfstring{$\tdo$}{} and local stochastic dominance \texorpdfstring{$\loc$}{}} \label{section:tdo_order}

The following natural order of tail dependence was introduced simultaneously in \citet{Koike.2022} and \citet{Siburg.2022a} for the bivariate setting.

\begin{definition}
We say that a $d$-copula $C_1$ is less tail dependent than another $d$-copula $C_2$, written $C_1 \tdo C_2$, if and only if $$\TD{\boldm{w}}{C_1} \leq \TD{\boldm{w}}{C_2} $$ for all $\boldm{w} \in \R^d_+$.
Similarly, $C_1$ is strictly less tail dependent than $C_2$, written $C_1 \stdo \C_2$, if and only if $$\TD{\boldm{w}}{C_1} < \TD{\boldm{w}}{C_2} $$ holds for all $\boldm{w} \in (0, \infty)^d$.
\end{definition}

We will see immediately that $\tdo$ is not an order but only a preorder; following common practice, however, we will nevertheless call $\tdo$ the \emph{tail dependence order}.

\begin{proposition}
The tail dependence order $\tdo$ is a preorder, i.e., it is reflexive and transitive; however, it is neither antisymmetric nor total. 
Furthermore,
\begin{enumerate}
	\item the greatest and maximal elements of this preorder are copulas with a tail dependence function equal to $\TD{\boldm{w}}{C^+}$;
	\item the least and minimal elements are copulas with vanishing tail dependence function.
\end{enumerate}
\end{proposition}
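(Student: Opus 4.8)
The plan is to check the two preorder axioms by hand and then to pin down the extremal elements using the sharp two-sided bound supplied by Proposition~\ref{prop:tail_dep_func}(a). Reflexivity and transitivity of $\tdo$ reduce immediately to the corresponding properties of the pointwise order $\leq$ on $\R_+$ applied to the underlying tail dependence functions, so these require no real work. To see that $\tdo$ is not antisymmetric I would exhibit two distinct copulas with identical tail dependence function: the independence copula $\Pi$ and any Gaussian copula $C_\rho$ with $0 < \rho < 1$ both have vanishing tail dependence function by Example~\ref{ex:gaussian_tdf}, so $\Pi \tdo C_\rho$ and $C_\rho \tdo \Pi$ while $\Pi \neq C_\rho$. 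To see that $\tdo$ is not total I would produce two copulas whose tail dependence functions cross: by the bivariate characterisation of Remark~\ref{ex:def_ev}, every concave $\TDLph{\Lambda}$ with $0 \leq \TDLph{\Lambda}(t) \leq \min\curly{t, 1-t}$ arises from a copula, so choosing two such profiles that intersect (for example a function favouring small $t$ together with its mirror image favouring large $t$) yields $C_1, C_2$ with neither $C_1 \tdo C_2$ nor $C_2 \tdo C_1$.

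For the greatest and least elements the decisive input is the estimate $0 \leq \TDL{\wc}{C} \leq \TDLn^+ = \TDL{\wc}{C^+}$, valid for every $d$-copula $C$. Since $C^+$ is itself a copula, any copula $C^\ast$ with $\TDL{\wc}{C^\ast} = \TDLn^+$ dominates all others, giving $C \tdo C^\ast$ for every $C$; hence such copulas are greatest elements and at least one exists. Conversely, if $g$ is a greatest element then $C^+ \tdo g$ forces $\TDLn^+ \leq \TDL{\wc}{g}$, which together with the upper bound yields $\TDL{\wc}{g} = \TDLn^+$. The argument for least elements is completely symmetric, now using the lower bound $\TDL{\wc}{C} \geq 0$ together with $\Pi$ as a witness of existence.

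It remains to identify the maximal and minimal elements, and this is the step that needs the most care, since in a general preorder maximal and greatest elements need not coincide. The point I would exploit is that a greatest element actually exists: if $m$ is maximal, then $m \tdo g$ for a greatest element $g$, and maximality forces $g \tdo m$, so $m$ and $g$ are $\tdo$-equivalent and therefore $\TDL{\wc}{m} = \TDLn^+$; conversely, every greatest element is trivially maximal. Thus the maximal elements coincide with the greatest elements, and the same reasoning with a least element collapses the minimal elements onto those with vanishing tail dependence function. The main obstacle is precisely this collapse: one must argue through the existence of the greatest (respectively least) element rather than analysing maximality (respectively minimality) directly, after which everything else follows routinely from the bounds in Proposition~\ref{prop:tail_dep_func}(a).
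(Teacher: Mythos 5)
Your proposal is correct and follows essentially the same route as the paper: reflexivity and transitivity from the pointwise order, explicit counterexamples for antisymmetry and totality, the two-sided bound $0 \leq \TDL{\wc}{C} \leq \TDL{\wc}{C^+}$ for the greatest/least elements, and the collapse of maximal onto greatest (resp.\ minimal onto least) elements by testing maximality against $C^+$ (resp.\ $\Pi$). The only differences are cosmetic choices of witnesses --- the paper uses $\Pi$ versus $C^-$ for non-antisymmetry and the crossing functions $\min\curly{2w_1, w_2, \ldots, w_d}/2$ and $\min\curly{w_1, \ldots, 2w_d}/2$ for non-totality, which is exactly your crossing-profile idea written out in dimension $d$.
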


\begin{proof}
The first assertion follows immediately from the properties of the pointwise ordering on the space of continuous functions.
Furthermore, $\tdo$ is not antisymmetric due to $\TD{\boldm{w}}{\Pi} = \TD{\boldm{w}}{C^-}$ and $\Pi \neq C^-$.
To see that $\tdo$ is not total, consider as an example the non-ordered tail dependence functions $\min \curly{2w_1, w_2, \ldots, w_d}/2$ and $\min \curly{w_1, w_2, \ldots, 2w_d}/2$ constructed, e.g., via the gluing technique from \citet{Siburg.2008b}.
To obtain the least and greatest elements, we apply Proposition~\ref{prop:tail_dep_func} stating that 
$$0 \leq \TD{\boldm{w}}{C} \leq \TD{\boldm{w}}{C^+}$$ holds for all $d$-copulas $C$. 
As these bounds are sharp, every copula $C$ with $\TD{\boldm{w}}{C}=0$ or $\TD{\boldm{w}}{C}=\TD{\boldm{w}}{C^+}$ is a least or greatest element, respectively. 
If $C$ is a maximal element then 
\begin{equation*}
	C \tdo D \implies D \tdo C
\end{equation*}
for any copula $D$. Thus, for $D = C^+$, we have $\TD{\boldm{w}}{C} = \TD{\boldm{w}}{C^+}$. 
Analogously, every minimal element must have vanishing tail dependence function.
\end{proof}

\begin{remark} \label{rem:too_tdo_equivalence}
An order similar to $\tdo$ was introduced in \citet{Li.2013} by ordering the copula values along rays.
Namely, $C_1$ is smaller than $C_2$ in the so-called tail orthant order, in short $C_1 \leq_{too} C_2$, if for all $\boldm{w} \in \R_+^d$ there exists a $t_{\boldm{w}} > 0$ such that $C_1(s\boldm{w}) \leq C_2(s\boldm{w})$ holds for all $s \leq t_{\boldm{w}}$.
While $\leq_{too}$ implies $\tdo$, the converse does not hold.

To see this, consider the Archimedean Joe-copula (family~(6) in \citet{Charpentier.2009}) with parameter $\theta = 2$, which we will call $C$ for the sake of simplicity.
Gluing (see \citet{Siburg.2008b}) $C$ and $C^+$ with respect to the first and second component results in the copulas $C_1$ and $C_2$, respectively. 
Now, on the one hand, as $C$ has vanishing tail dependence function, so have $C_1$ and $C_2$, and $\TDL{\wc}{C_1} = \TDL{\wc}{C_2}$.
On the other hand, $C_1$ and $C_2$ are strictly ordered conversely along the directions $\boldm{w}_1 = (1/2, 1)$ and $\boldm{w}_2 = (1, 1/2)$, so neither $C_1 \leq_{too} C_2$ nor $C_2 \leq_{too} C_1$ can hold.
%This can be seen by  glued Archimedean Joe $2$-copulas (see copula family~(6) in \citet{Charpentier.2009}) $C$  in  as can be seen by using two glued versions of the Archimedean Joe $2$-copula. 
\end{remark}

The following result shows that the tail dependence order implies the ordering of the tail dependence coefficient but not vice versa.

\begin{proposition} \label{prop:tdc_monotone}
$C_1 \tdo C_2$ implies $\TDC(C_1) \leq \TDC(C_2)$, but the converse is not true in general.
\end{proposition}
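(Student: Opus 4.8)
The plan is to treat the two directions separately: the forward implication is essentially a definitional observation, while the converse reduces to producing a single counterexample.

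For the implication $C_1 \tdo C_2 \implies \TDC(C_1) \leq \TDC(C_2)$, I would recall that the tail dependence coefficient is nothing but the tail dependence function evaluated in the diagonal direction $\boldm{1} = (1, \ldots, 1)$, that is, $\TDC(C) = \TDL{\boldm{1}}{C}$, as is immediate from comparing the two defining limits. Specializing the hypothesis $\TDL{\boldm{w}}{C_1} \leq \TDL{\boldm{w}}{C_2}$, which holds for all $\boldm{w} \in \R_+^d$, to the single direction $\boldm{w} = \boldm{1}$ then yields $\TDC(C_1) = \TDL{\boldm{1}}{C_1} \leq \TDL{\boldm{1}}{C_2} = \TDC(C_2)$ at once.

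For the converse, the strategy is to exhibit two copulas whose tail dependence coefficients coincide while their full tail dependence functions fail to be pointwise comparable. The most economical choice reuses the non-total pair already appearing in the previous proposition: two copulas $C_1, C_2$ with $\TDL{\boldm{w}}{C_1} = \min \curly{2 w_1, w_2, \ldots, w_d}/2$ and $\TDL{\boldm{w}}{C_2} = \min \curly{w_1, \ldots, w_{d-1}, 2 w_d}/2$. Each of these functions is positive homogeneous of order $1$, $d$-increasing, and bounded by $\TDLn^+$, so by Proposition~\ref{prop:tail_dep_func} it is a genuine tail dependence function, a realizing copula being obtained through the gluing construction of \citet{Siburg.2008b} (and, for $d=2$, the realizability is even more transparent from the concavity-plus-bound characterization of Remark~\ref{ex:def_ev}). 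Evaluating both in the diagonal direction gives $\TDC(C_1) = \TDL{\boldm{1}}{C_1} = 1/2 = \TDL{\boldm{1}}{C_2} = \TDC(C_2)$, so in particular $\TDC(C_1) \leq \TDC(C_2)$.

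It then remains to verify that $\tdo$ genuinely fails, for which I would simply point to two directions along which the inequality reverses: at $\boldm{w} = (1, 2, \ldots, 2)$ one has $\TDL{\boldm{w}}{C_1} = 1 > 1/2 = \TDL{\boldm{w}}{C_2}$, whereas at $\boldm{w} = (2, \ldots, 2, 1)$ the two values are exchanged, so neither $C_1 \tdo C_2$ nor $C_2 \tdo C_1$ can hold. The only genuine bookkeeping, and the nearest thing to an obstacle, is confirming that these piecewise-linear homogeneous functions really are tail dependence functions of copulas; since this is already granted by Proposition~\ref{prop:tail_dep_func} together with the cited gluing realization, the claim follows immediately.
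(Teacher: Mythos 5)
Your proof is correct and follows essentially the same approach as the paper: the forward implication is the same one-line specialization of the pointwise ordering to $\boldm{w}=\boldm{1}$, and the converse is refuted by exhibiting two copulas with equal tail dependence coefficients whose tail dependence functions are not comparable. The only difference is the concrete counterexample---the paper points to the two concave curves of Figure~\ref{fig:valid_tdf}, which coincide at $t=1/2$ and hence have equal $\TDC$ by \eqref{eq:tdc_restricted}, whereas you reuse the min-type functions from the preceding proposition; both are valid, and yours has the mild advantage of working verbatim in every dimension $d$ rather than relying on the bivariate characterization.
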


\begin{proof}
The first assertion is trivial in view of $\TDC(C) = \TDL{\boldm{1}}{C}$. 
The second assertion follows from \eqref{eq:tdc_restricted} and the counterexamples in Figure~\ref{fig:valid_tdf} where the two black curves coincide at the point $1/2$.
\end{proof}

\begin{definition}
Let $C_1$ and $C_2$ be $d$-copulas.
\begin{enumerate}
	\item $C_1$ is called smaller than $C_2$ in the lower orthant order, abbreviated $C_1 \leq C_2$, if $C_1(\boldm{u}) \leq C_2(\boldm{u})$ for all $\boldm{u} \in [0, 1]^d$.
	\item $C_1$ is called smaller than $C_2$ in the local lower orthant order, abbreviated $C_1 \loc C_2$, if there exists an $\varepsilon > 0$ such that $C_1(\boldm{u}) \leq C_2(\boldm{u})$ for all $\boldm{u} \in B_\varepsilon(\boldm{0}) \cap [0, 1]^d$.
\end{enumerate}
\end{definition}

Note that $C_1 \leq C_2$ means that, unless $C_1=C_2$, $C_1$ (first-order) stochastically dominates $C_2$. 
Analogously, we say that, if $C_1 \loc C_2$ with $C_1 \neq C_2$, $C_1$ locally (first-order) stochastically dominates $C_2$. Note further that $\loc$ is strictly weaker than $\leq$ which is easily seen using gluing techniques as in \citet{Siburg.2008b}. 

A straightforward connection between $\tdo$ and $\loc$ is obtained by noting that smaller values of $C$ result in smaller values of $\Lambda$, leading to the following simple observation.

\begin{proposition}
$$ C_1 \loc C_2 \implies C_1 \tdo C_2 ~. $$ 
\end{proposition}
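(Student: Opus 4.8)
The plan is to unwind the two definitions directly: the hypothesis $C_1 \loc C_2$ gives a pointwise inequality on a full neighbourhood $B_\varepsilon(\boldm{0}) \cap [0,1]^d$, while the conclusion $C_1 \tdo C_2$ concerns only the limiting behaviour of $C$ along rays through $\boldm{0}$. The key observation is that every such ray enters the neighbourhood for small enough parameter, so the local ordering is automatically inherited by the ray limits defining the tail dependence functions.

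Concretely, I would fix an arbitrary direction $\boldm{w} \in \R_+^d$ and choose $\varepsilon > 0$ as supplied by the hypothesis $C_1 \loc C_2$. Since $\norm{s\boldm{w}}_1 = s\norm{\boldm{w}}_1 \to 0$ and each coordinate $s w_k \to 0$ as $s \searrow 0$, there is an $s_0 > 0$ such that $s\boldm{w} \in B_\varepsilon(\boldm{0}) \cap [0,1]^d$ for all $0 < s \leq s_0$. For these $s$ the hypothesis yields $C_1(s\boldm{w}) \leq C_2(s\boldm{w})$, and dividing by $s > 0$ preserves the inequality, so that $C_1(s\boldm{w})/s \leq C_2(s\boldm{w})/s$ for all $0 < s \leq s_0$.

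It then remains to pass to the limit $s \searrow 0$. Both limits exist by hypothesis (the orders $\tdo$ and $\loc$ presuppose that the tail dependence functions are well defined), and since weak inequalities are preserved under limits, we obtain $\TDL{\boldm{w}}{C_1} \leq \TDL{\boldm{w}}{C_2}$. As $\boldm{w} \in \R_+^d$ was arbitrary, this is exactly $C_1 \tdo C_2$.

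There is no genuine obstacle here; the only point requiring a moment of care is that the pointwise inequality is available merely for $s \leq s_0$ rather than on all of $(0,\infty)$, but this is precisely the regime that the defining limit of $\TDLn$ probes, so the restriction is harmless. The argument is therefore an immediate consequence of Definition~\ref{def:tail_dependence_function} together with the monotonicity of limits.
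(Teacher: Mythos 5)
Your argument is correct and is precisely the reasoning the paper has in mind: it states the proposition as a "simple observation" following from the fact that smaller copula values yield smaller values of $\Lambda$, which is exactly your ray-restriction and limit-passage argument. No issues.
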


Intuitively, from the representation $C(\boldm{u}) = \TD{\boldm{u}}{C} + R(\boldm{u})\rbraces{u_1 + \ldots + u_d}$ in Theorem~\ref{thm:concave_quasi_copula} one might be tempted to expect also the converse implication $C_1 \tdo C_2 \implies C_1 \loc C_2$. This, however, does not hold in general because any two non-ordered copulas with vanishing tail dependence functions yield a counterexample; see Remark~\ref{rem:too_tdo_equivalence}. Therefore, the question remains whether
\begin{equation} \label{eqn:core_result}
	C_1 \stdo C_2 \implies C_1 \loc C_2  ~?
\end{equation}
This question will be investigated in detail in the rest of the paper.

\section{The implication \texorpdfstring{$C_1 \stdo C_2 \implies C_1 \loc C_2$}{(5)} does not hold in general, but only on cones} \label{section:general_tail_behaviour}

The following example, communicated to us by P. Jaworski, shows that \eqref{eqn:core_result} does not hold in general.

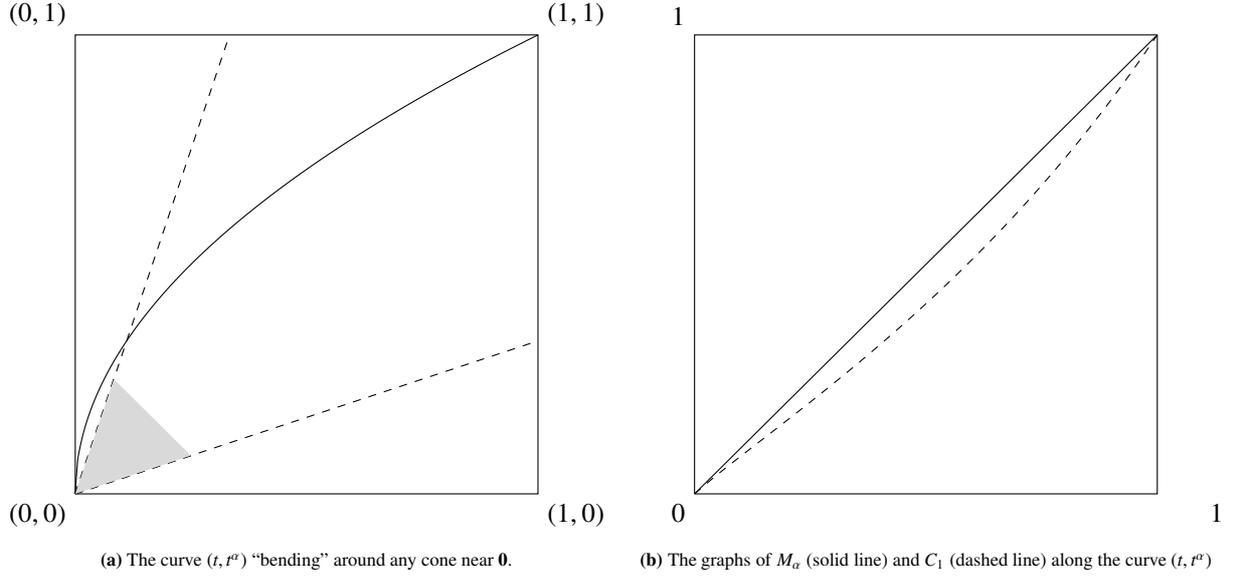
\begin{figure}
\begin{subfigure}[b]{0.49\textwidth}
    \centering
    \resizebox{\linewidth}{!}{\begin{tikzpicture}
	\draw (0,0) node[anchor=north east] {$(0,0)$} -- (6,0) node[anchor=north west] {$(1,0)$} -- (6,6)  node[anchor=south west] {$(1,1)$} -- (0,6)  node[anchor=south east] {$(0,1)$} -- (0,0);
    \draw[domain=0:6, samples=200, smooth, variable=\x] plot ({\x}, {sqrt(6)*6*sqrt{(\x)/6}});
    \draw[domain=0:6, smooth, variable=\x, dashed] plot({\x}, {\x/3});
    \draw[name path = A, domain=0:1.5, smooth, variable=\x, dashed] plot({\x}, {\x/3});
    \draw[domain=0:6, smooth, variable=\x, dashed] plot({\x/3}, {\x});
    \draw[name path = B, domain=0:1.5, smooth, variable=\x, dashed] plot({\x/3}, {\x});
    \tikzfillbetween[of=A and B]{fill=gray!30};
	%\draw[dotted] (0,0) -- (2,0) -- (2,2) -- (0,2) -- (0,0);
	%\draw[dotted] (2,2) -- (5,2) -- (5,5) -- (2,5) -- (2,2);
	%\draw[dotted] (5,5) -- (6,5) -- (6,6) -- (5,6) -- (5,5);
	%\draw (2,2) -- (5,5);
\end{tikzpicture}}
\subcaption{The curve $(t,t^\alpha)$ \enquote{bending} around any cone near $\boldm{0}$.}
\label{fig:mo-clayton-a}
\end{subfigure}
\begin{subfigure}[b]{0.49\textwidth} 
    \centering
    \resizebox{\linewidth}{!}{\begin{tikzpicture}
	\draw (0,0) node[anchor=north east] {\phantom{$(0,)$}$0$} -- (6,0) node[anchor=north west] {\phantom{$(1,)$}$1$} -- (6,6)  node[anchor=south west] {} -- (0,6)  node[anchor=south east] {$1$} -- (0,0);
    \draw[domain=0:6, samples=200, smooth, variable=\x] plot ({\x}, {\x});
    \draw[domain=0:1, smooth, variable=\x, dashed] plot({6*\x}, {6*\x/(1 + sqrt(\x) - \x)});
	%\draw[dotted] (0,0) -- (2,0) -- (2,2) -- (0,2) -- (0,0);
	%\draw[dotted] (2,2) -- (5,2) -- (5,5) -- (2,5) -- (2,2);
	%\draw[dotted] (5,5) -- (6,5) -- (6,6) -- (5,6) -- (5,5);
	%\draw (2,2) -- (5,5);
\end{tikzpicture}}
\subcaption{The graphs of $M_\alpha$ (solid line) and $C_1$ (dashed line) along the curve $(t, t^\alpha)$}
\label{fig:mo-clayton-b}
\end{subfigure}
\caption{Plots illustrating Example~\ref{example:eo_doesnotimply_loo} for $\alpha = \frac{1}{2}$.}
\label{fig:mo-clayton-example}
\end{figure}

\begin{example} \label{example:eo_doesnotimply_loo}
Keeping in mind the ray-like definition of the tail dependence function, we use the curve $(t, t^\alpha)$ that "bends" around any given cone near $\bm{0}$; see Figure~\ref{fig:mo-clayton-a}. 
Consider the Marshall-Olkin copula $M_\alpha(\boldm{u}) = \min \curly{u_1^{1-\alpha} u_2, u_1}$ with parameter $\alpha \in (0, 1)$ and the Archimedean Clayton copula $C_\vartheta$ with $\vartheta > 0$ given by 
$C_\vartheta (\boldm{u}) = \big(u_1^{-\vartheta} + u_2^{-\vartheta} - 1\big)^{-1/\vartheta}$.
Note that the singular support of $M_{\alpha}$ is precisely the curve $(t, t^\alpha)$.

While $M_\alpha$ is strictly smaller than $C_\vartheta$ in the tail dependence order in view of
\begin{equation*}
	\TD{\boldm{w}}{M_\alpha} = 0 < \TD{\boldm{w}}{C_\vartheta} \text{ for all } \boldm{w} \in (0, \infty)^2 ~,
\end{equation*}
setting $\boldm{u} := (t, t^\alpha)$ yields the opposite inequality for the copulas themselves, namely
\begin{equation*}
	C_\vartheta(t, t^\alpha) 	= \frac{t^{1+\alpha}}{\rbraces{t^\vartheta + t^{\alpha \vartheta} - t^{(1+\alpha)\vartheta}}^{\frac{1}{\vartheta}}}
								= \frac{t}{\rbraces{t^{(1-\alpha)\vartheta} + 1 - t^{\vartheta}}^{\frac{1}{\vartheta}}}
								< t = M_\alpha(t, t^\alpha) ~;
\end{equation*}
see Figure~\ref{fig:mo-clayton-b}. Thus, even the strict tail dependence ordering does not imply an ordering of the values of the underlying copulas.
\end{example}

The next result shows that the reasoning in the above example is the only possible in the sense that, as soon as we move away from the coordinate axes, the implication in \eqref{eqn:core_result} becomes true.

\begin{theorem} \label{thm:eo_implies_cone_loo}
Suppose the $d$-copulas $C_1$ and $C_2$ fulfil $C_1 \stdo C_2$.
Then for any cone\footnote{A set $S \subset \mathbb{R}^d$ is called a cone if $\boldm{w} \in S$ implies $\lambda \boldm{w} \in S$ for every $\lambda > 0$.} $S \subset (0, \infty)^d$ such that $S \cup \curly{\boldm{0}}$ is closed in $[0, \infty)^d$, there exists an $\varepsilon > 0$ such that $C_1 (\boldm{u}) \leq C_2 (\boldm{u})$ for all $\boldm{u} \in S \cap B_\varepsilon (\boldm{0})$.
\end{theorem}

\begin{proof}
Our proof follows a standard technique in convex analysis (see, e.g., \citet{Scholtes.2012}). 
For ease of notation, define $f: (S \cup \curly{\boldm{0}}) \cap [0, 1]^d \rightarrow [-1, 1]$ as $$ f(\boldm{u}) := C_2(\boldm{u}) - C_1(\boldm{u}) ~. $$ Due to our assumption $C_1 \stdo C_2$, the directional derivative of $f$ in $\boldm{0}$ exists and is strictly positive, i.e.\ $f'(\boldm{0} \, ; \, \boldm{s}) > 0$ for all $\boldm{s} \in S$. Moreover, every copula is Lipschitz continuous with Lipschitz constant $1$, so $f$ is Lipschitz continuous with Lipschitz constant $2$.

We will show that $\boldm{0}$ is a local minimum of $f$ which in turn implies
\begin{equation*}
	C_2 (\boldm{u}) - C_1(\boldm{u}) = f(\boldm{u})	\geq 0 
\end{equation*} 
in a neighbourhood of $\boldm{0}$. 
To do so, we assume by contradiction that there exists a sequence $\boldm{w}_n \in S$ with $\boldm{w}_n \rightarrow \boldm{0}$ and $f(\boldm{w}_n) \leq f(\boldm{0})$. 
Decompose $\boldm{w}_n = r_n \boldm{s}_n$ with $r_n := \norm{\boldm{w}_n}_1$ and $\boldm{s}_n := \boldm{w}_n/\norm{\boldm{w}_n}_1$. 
Then $\boldm{s}_n$ has a convergent subsequence, again denoted by $\boldm{s}_n$, with $\boldm{s}_n \rightarrow \boldm{s}^* \in S$, and it holds that
\begin{align*}
	0	&\geq \frac{f(\boldm{w}_n) - f(\boldm{0})}{r_n}	
		= \frac{f(r_n \boldm{s}_n) - f(\boldm{0})}{r_n} \\
		&= \frac{f(r_n \boldm{s}_n) - f(r_n \boldm{s}^*)}{r_n} + \frac{f(r_n \boldm{s}^*) - f(\boldm{0})}{r_n}
		\rightarrow f'(\boldm{0} \ ; \ \boldm{s}^*)~,
\end{align*}
where the convergence of the first summand follows from the abovementioned Lipschitz continuity of $f$ since
\begin{equation*}
	0	\leq \frac{\abs{f(r_n \boldm{s}_n) - f(r_n \boldm{s}^*)}}{r_n}
		\leq \frac{2 \norm{r_n \boldm{s}_n - r_n \boldm{s}^*}_1}{r_n}
		= 2\norm{\boldm{s}_n - \boldm{s}^*}_1
		\rightarrow 0 ~.
\end{equation*}
But $f'(\boldm{0} \ ; \ \boldm{s}^*) > 0$ which leads us to the desired contradiction. \qedhere
\end{proof}

\begin{remark}
We point out that all notions and results of Sections~\ref{section:tdo_order} and \ref{section:general_tail_behaviour} can also be formulated and proven around any other vertex of $[0,1]^d$ instead of $\bm{0}$, by permutating the vertices accordingly; see \citet[Sect.~1.7.3]{Durante.2015}.   
\end{remark}

\section{When the implication \texorpdfstring{$C_1 \stdo C_2 \implies C_1 \loc C_2$}{(5)} does hold} \label{section:classes_tail_behaviour}

We will now focus on various copula families for which the implication in \eqref{eqn:core_result} holds true, thus yielding a description of the (strict) tail dependence order in terms of local stochastic dominance. 
We know from Section~\ref{section:general_tail_behaviour} that \eqref{eqn:core_result} does not hold in general, due to possible irregular behaviour of the copulas near the coordinate axes.

If we restrict ourselves to specific classes of copulas, however, thereby providing some additional regularity, we may be able to control the limiting behaviour near the axes and prove the implication in \eqref{eqn:core_result}.
All the example classes in this section share a common feature---they can be described by one single (oftentimes univariate) generating function. 
The idea to prove the implication $$ C_1 \stdo C_2 \implies C_1 \loc C_2 $$ is then to prove that $C_1 \stdo C_2$ implies an ordering of the generating functions near 0, allowing us to conclude that $C_1 \loc C_2$.

\subsection{Copulas with a prescribed diagonal}

Given a $d$-copula $C$, the function $\delta: [0, 1] \rightarrow [0, 1]$ with $\delta(t) := C(t\boldm{1})$ is called its diagonal section. The question arises which functions $\delta$ can be realized as diagonal sections.

\begin{proposition} \label{def:diagonal_copula}
A function $\delta: [0, 1] \rightarrow [0, 1]$ is the diagonal of some $d$-copula if, and only if, it satisfies the following properties:
\begin{enumerate}
	\item $\delta(0) = 0$ and $\delta(1) = 1$
    \item $\delta(t) \leq t$
	\item $\delta$ is increasing
	\item $\abs{\delta(s) - \delta(t)} \leq d \abs{s-t}$.
\end{enumerate} 
\end{proposition}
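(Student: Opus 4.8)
The plan is to prove the two implications separately; the forward (necessity) direction is routine, while the construction of a copula with a prescribed diagonal in the backward (sufficiency) direction carries all the work. For necessity I would read off each of the four properties from the copula axioms applied to $\delta(t) = C(t\boldm{1})$. Property~1 is immediate: $\delta(0) = C(\boldm{0}) = 0$ because $C$ is grounded, and $\delta(1) = C(\boldm{1}) = 1$ because $C$ has uniform margins. Property~2 follows from the Fréchet--Hoeffding upper bound $C(\boldm{u}) \le \min_k u_k$ (itself a consequence of groundedness and the margin condition), which at $\boldm{u} = t\boldm{1}$ gives $\delta(t) \le t$. Property~3 holds because a grounded $d$-increasing function with uniform margins is non-decreasing in each coordinate, so raising all $d$ arguments simultaneously cannot decrease the value. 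Finally, Property~4 is the Lipschitz estimate $\abs{C(\boldm{u}) - C(\boldm{v})} \le \sum_{k=1}^d \abs{u_k - v_k}$ valid for every copula; evaluated at $\boldm{u} = s\boldm{1}$ and $\boldm{v} = t\boldm{1}$ it yields $\abs{\delta(s) - \delta(t)} \le d\abs{s - t}$.

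For sufficiency I would exhibit an explicit copula whose diagonal equals $\delta$. My first candidate is the multivariate diagonal copula
$$ C(\boldm{u}) := \min\curly{u_1, \ldots, u_d, \tfrac{1}{d}\rbraces{\delta(u_1) + \cdots + \delta(u_d)}}, $$
which for $d = 2$ reduces to the classical diagonal copula $\min\curly{u, v, \tfrac12(\delta(u)+\delta(v))}$. Groundedness is clear, since $u_k = 0$ forces the minimum to vanish. The diagonal comes out correctly because on $\boldm{u} = t\boldm{1}$ the averaged term equals $\delta(t)$, which by Property~2 satisfies $\delta(t)\le t$, so $C(t\boldm{1}) = \delta(t)$. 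The uniform-margin condition is precisely where Property~4 enters: setting all but one argument equal to $1$, one needs $\tfrac{1}{d}\rbraces{\delta(u) + (d-1)} \ge u$, i.e.\ $\delta(u) \ge d(u-1) + 1$, and this is exactly the lower bound obtained from $\abs{\delta(1) - \delta(u)} \le d(1-u)$ together with $\delta(1) = 1$.

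I expect the main obstacle to be verifying that this candidate is $d$-increasing, i.e.\ that every rectangle has non-negative $C$-volume. Since $C$ is a pointwise minimum of the $d$ coordinate projections and the single averaged term $\tfrac1d\rbraces{\delta(u_1)+\cdots+\delta(u_d)}$, I would partition each rectangle according to which of these $d+1$ functions realizes the minimum at the relevant vertices and control the sign of each contribution using the monotonicity and Lipschitz bounds (Properties~3 and 4). This is the delicate part, and it is where the specific form of the construction and the Lipschitz constant $d$ are essential; for $d = 2$ it is the known verification that the diagonal copula is a genuine copula. Should the averaged candidate fail the $d$-increasing test for some $d \ge 3$, I would fall back on a Bertino-type \emph{lower} copula built from $t \mapsto t - \delta(t)$, whose boundary and diagonal behaviour are governed by the same two inequalities, or simply invoke the corresponding existence result from the literature on multivariate diagonals.
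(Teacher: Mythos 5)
Your necessity direction is fine: all four properties do follow from groundedness, the margin condition, the Fr\'echet--Hoeffding upper bound, and the $1$-Lipschitz property of copulas exactly as you describe, and your observation that the margin condition of the candidate copula is equivalent to the lower Lipschitz bound $\delta(u)\ge d(u-1)+1$ is correct. Note, for calibration, that the paper itself does not prove this proposition at all --- it simply cites \citet[Thm.~2.6.1]{Durante.2015} --- so the entire burden of a self-contained argument falls on the sufficiency direction, which is precisely where your proposal stops short.

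The gap is the $d$-increasingness of $\min\curly{u_1,\ldots,u_d,\tfrac1d(\delta(u_1)+\cdots+\delta(u_d))}$, and it is not a routine verification that you may defer. Even for $d=2$ this is the substance of the Fredricks--Nelsen theorem, and their argument does not proceed by the vertex-by-vertex case split you sketch; it exploits genuinely two-dimensional cancellations. For $d\ge3$ the situation is worse: the averaged term is modular (it assigns zero volume to every box), so all the mass of your candidate sits on the main diagonal and on the interfaces between the regions where different terms realize the minimum, and controlling the sign of the interface contributions is exactly the open step --- this is why the literature (Cuculescu--Theodorescu, Jaworski, and the proof behind the Durante--Sempi theorem the paper cites) uses entirely different constructions for $d\ge3$ rather than the naive multivariate analogue of the diagonal copula. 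Your two fallbacks do not repair this: the ``Bertino-type'' multivariate candidate is left unspecified (the formula \eqref{eq:Bertino} in the paper is intrinsically bivariate, and its natural multivariate extensions are also not copulas in general), and ``invoke the existence result from the literature'' is not a proof but the same citation the paper already makes. So either commit to the citation honestly, or supply an actual verification of $d$-increasingness for a concrete construction valid for all $d$; as written, the sufficiency half is a plan, not a proof.
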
%

For a proof, we refer to \citet[Thm.~2.6.1]{Durante.2015}.

\begin{example}
In the bivariate case, there are two well-known constructions of copulas with a prescibed diagonal $\delta$. First of all, the function
\begin{equation} \label{eq:Fredricks_Nelsen}
	C_{FN}(u_1,u_2) := \min \curly{u_1, u_2, \frac{\delta(u_1) + \delta(u_2)}{2}}
\end{equation}
is a 2-copula with diagonal section $\delta$ and called \emph{Fredricks-Nelsen copula}. It can be shown that the Fredricks-Nelsen copula is the pointwise maximal copula among all symmetric\footnote{A $2$-copula $C$ is called symmetric if $C(u_1,u_2) = C(u_2,u_1)$.} copulas with the same prescribed diagonal.

Another copula with a prescribed diagonal $\delta$ is given by
\begin{equation} \label{eq:Bertino}
    C_B(u_1, u_2) := \begin{cases} u_1 - \min\limits_{t \in [u_1, u_2]} (t - \delta(t)) &, \, u_1 \leq u_2 \\ u_2 - \min\limits_{t \in [u_2, u_1]} (t - \delta(t)) &, \, u_2 \leq u_1 \end{cases}~.
\end{equation}
This is the so-called \emph{Bertino copula}, which is the pointwise minimum among all copulas with the same diagonal $\delta$.
\end{example}

The following result shows that, in any dimension, the strict tail dependence ordering implies the local ordering of the corresponding diagonals near $0$.

\begin{theorem} \label{lemma:diagonal_copulas}
    Let $C_1$ and $C_2$ be two $d$-copulas, each admitting a tail dependence function, with diagonal sections $\delta_1$ and $\delta_2$ such that $C_1 \stdo C_2$. Then there is an $\epsilon > 0$ such that $$ \delta_1(t) \leq \delta_2(t) $$ for all $t\in [0,\epsilon]$.
\end{theorem}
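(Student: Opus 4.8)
The plan is to recognize the diagonal as the restriction of the copula to a single ray and then invoke Theorem~\ref{thm:eo_implies_cone_loo} directly. Concretely, the diagonal section is $\delta_i(t) = C_i(t\boldm{1})$, i.e.\ the value of $C_i$ along the principal ray $S := \set{\lambda \boldm{1}}{\lambda > 0}$ emanating from $\boldm{0}$ in the direction of the all-ones vector $\boldm{1} = (1,\dots,1)$. Thus the asserted ordering $\delta_1 \leq \delta_2$ near $0$ is nothing but the ordering $C_1 \leq C_2$ of the copulas themselves along this one ray near $\boldm{0}$.

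First I would verify that $S$ meets the hypotheses of Theorem~\ref{thm:eo_implies_cone_loo}: it is a cone (if $\boldm{w} = \mu\boldm{1} \in S$ then $\lambda\boldm{w} = \lambda\mu\boldm{1}\in S$ for every $\lambda>0$), it satisfies $S \subset (0,\infty)^d$ (every coordinate of $\lambda\boldm{1}$ equals $\lambda>0$), and $S\cup\curly{\boldm{0}} = \set{\lambda\boldm{1}}{\lambda\geq 0}$ is the closed half-line, hence closed in $[0,\infty)^d$. Since $C_1$ and $C_2$ each admit a tail dependence function and $C_1 \stdo C_2$ by assumption, the hypotheses of Theorem~\ref{thm:eo_implies_cone_loo} are in force (the existence of the tail dependence functions is exactly what guarantees that the directional derivative of $C_2 - C_1$ at $\boldm{0}$ used in that proof exists and, by strictness, is positive along $\boldm{1} \in (0,\infty)^d$).

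Applying that theorem then yields an $\varepsilon>0$ with $C_1(\boldm{u})\leq C_2(\boldm{u})$ for every $\boldm{u} \in S\cap B_\varepsilon(\boldm{0})$. It remains only to translate this into a statement in the variable $t$: for all sufficiently small $t>0$ the point $t\boldm{1}$ lies in $B_\varepsilon(\boldm{0})$, hence in $S\cap B_\varepsilon(\boldm{0})$, so that $\delta_1(t)=C_1(t\boldm{1})\leq C_2(t\boldm{1})=\delta_2(t)$. Choosing $\epsilon>0$ small enough that $t\boldm{1}\in B_\varepsilon(\boldm{0})$ for all $t\in[0,\epsilon]$ gives the claim on $[0,\epsilon]$; at $t=0$ the inequality is trivial since both copulas are grounded, so $\delta_1(0)=\delta_2(0)=0$.

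I do not expect a genuine obstacle here, as the statement is essentially a corollary of Theorem~\ref{thm:eo_implies_cone_loo}: the only points requiring (minor) care are confirming that the single principal ray qualifies as an admissible cone in the sense of that theorem and fixing $\epsilon$ small enough that $t\boldm{1}$ remains inside the ball $B_\varepsilon(\boldm{0})$.
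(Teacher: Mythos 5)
Your proposal is correct, but it takes a different (and somewhat heavier) route than the paper. The paper's own proof is a two-line direct computation: since both copulas admit tail dependence functions, the difference quotient along the diagonal satisfies
$$ \lim_{t \searrow 0} \frac{\delta_2(t)-\delta_1(t)}{t} \;=\; \TD{\boldm{1}}{C_2}-\TD{\boldm{1}}{C_1} \;>\; 0 ~, $$
so the numerator must be nonnegative for all small $t$. You instead specialize Theorem~\ref{thm:eo_implies_cone_loo} to the one-dimensional cone $S=\set{\lambda\boldm{1}}{\lambda>0}$; your verification that $S$ is an admissible cone and the translation back to the variable $t$ are both accurate, so the argument goes through. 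The two routes are close relatives: the compactness/subsequence step in the proof of Theorem~\ref{thm:eo_implies_cone_loo} becomes vacuous when the cone consists of a single direction, and what remains is exactly the difference-quotient computation above. What the direct computation buys is economy and a sharper hypothesis: it makes visible that only $\TD{\boldm{1}}{C_1}<\TD{\boldm{1}}{C_2}$, i.e.\ the strict ordering of the tail dependence \emph{coefficients}, is actually used, not the full strict ordering $C_1\stdo C_2$ on all of $(0,\infty)^d$ that Theorem~\ref{thm:eo_implies_cone_loo} formally requires. What your route buys is uniformity for free if one later wants the same conclusion on a whole closed cone of directions rather than just the diagonal.
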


\begin{proof}
    This follows immediately from
    \begin{equation*}
	\lim\limits_{t \searrow 0} \frac{\delta_2(t) - \delta_1(t)}{t}	= \lim\limits_{t \searrow 0} \frac{C_2(t\boldm{1}) - C_1(t\boldm{1})}{t}
																	= \TD{\boldm{1}}{C_2} - \TD{\boldm{1}}{C_1} 
																	> 0  ~.
\end{equation*}
\end{proof}

\begin{corollary} \label{cor:FN_Bertino}
For the classes of bivariate Fredricks-Nelsen, respectively Bertino, copulas we have $$ C_1 \stdo C_2 \implies C_1 \loc C_2 ~. $$ 
\end{corollary}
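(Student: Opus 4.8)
The plan is to combine the local diagonal ordering furnished by Theorem~\ref{lemma:diagonal_copulas} with the observation that both the Fredricks--Nelsen and the Bertino construction are pointwise monotone in their diagonal section. First I would invoke Theorem~\ref{lemma:diagonal_copulas}: since $C_1 \stdo C_2$ and both copulas admit tail dependence functions, there is an $\varepsilon > 0$ with $\delta_1(t) \leq \delta_2(t)$ for all $t \in [0, \varepsilon]$. I would then fix an arbitrary $\boldm{u} = (u_1, u_2) \in B_\varepsilon(\boldm{0}) \cap [0,1]^2$, so that $u_1, u_2 \in [0, \varepsilon)$, and verify $C_1(u_1,u_2) \leq C_2(u_1,u_2)$ separately for each of the two families; this is precisely the defining property of $C_1 \loc C_2$.

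For the Fredricks--Nelsen family the inequality is immediate from \eqref{eq:Fredricks_Nelsen}: because $u_1, u_2 \in [0,\varepsilon]$ we have $(\delta_1(u_1)+\delta_1(u_2))/2 \leq (\delta_2(u_1)+\delta_2(u_2))/2$, and the minimum is monotone in this third argument, so $\min\{u_1, u_2, (\delta_1(u_1)+\delta_1(u_2))/2\} \leq \min\{u_1, u_2, (\delta_2(u_1)+\delta_2(u_2))/2\}$, i.e.\ $C_1(\boldm{u}) \leq C_2(\boldm{u})$.

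For the Bertino family I would exploit symmetry to assume $u_1 \leq u_2$ and read off \eqref{eq:Bertino}. Since the minimization range $[u_1, u_2]$ lies inside $[0, \varepsilon]$, the ordering of diagonals gives $t - \delta_1(t) \geq t - \delta_2(t)$ for every $t \in [u_1, u_2]$; taking the minimum over $t$ preserves this inequality, so $\min_{t \in [u_1,u_2]}(t - \delta_1(t)) \geq \min_{t \in [u_1,u_2]}(t - \delta_2(t))$, whence $u_1 - \min_{t}(t-\delta_1(t)) \leq u_1 - \min_{t}(t - \delta_2(t))$, that is $C_1(\boldm{u}) \leq C_2(\boldm{u})$.

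There is no substantial obstacle here: the analytic content of the statement is already carried by Theorem~\ref{lemma:diagonal_copulas}, and what remains is the purely structural fact that both constructions depend monotonically on $\delta$. The one point that warrants care is the direction of the inequality in the Bertino case, where a larger diagonal yields a \emph{smaller} minimand $t - \delta(t)$, hence a smaller minimum, and thus a \emph{larger} copula value; I would state this sign bookkeeping explicitly and also record the (trivial but necessary) remark that the minimization interval $[u_1,u_2]$ never leaves the set $[0,\varepsilon]$ on which the two diagonals are ordered, so that the same $\varepsilon$ serves as the radius in the definition of $\loc$.
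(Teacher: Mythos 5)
Your proposal is correct and follows exactly the paper's route: invoke Theorem~\ref{lemma:diagonal_copulas} to order the diagonals near $0$, then use the explicit formulas \eqref{eq:Fredricks_Nelsen} and \eqref{eq:Bertino} together with their monotone dependence on $\delta$. The paper leaves the monotonicity verification (including the sign reversal in the Bertino minimand) implicit, whereas you spell it out; there is no substantive difference.
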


\begin{proof}
    Since $C_1 \stdo C_2$ implies that $\delta_1(t) \leq \delta_2(t)$ for small enough $t$ in view of Theorem~\ref{lemma:diagonal_copulas}, the result follows from the explicit representations in \eqref{eq:Fredricks_Nelsen} and \eqref{eq:Bertino}.
\end{proof}

The following special class of symmetric bivariate copulas called \emph{semilinear copulas} was introduced in \citet{Durante.2006}, and later characterized in terms of the diagonal function alone in \citet[Thm.~4]{Durante.2008b}.

\begin{proposition}
Let $\delta: [0, 1] \rightarrow [0, 1]$ be the diagonal of some $2$-copula.
Then the function
\begin{equation} \label{eq:semilinear_copula}
    C_{SL}(u_1, u_2) := \min(u_1, u_2) \cdot \frac{\delta(\max(u_1, u_2))}{\max(u_1, u_2)}
\end{equation}
is a $2$-copula if, and only if, $\delta$ satisfies the following properties:
\begin{enumerate}
	\item $\frac{\delta(t)}{t}$ is increasing on $(0,1]$
	\item $\frac{\delta(t)}{t^2}$ is decreasing on $(0,1]$.
\end{enumerate} 
\end{proposition}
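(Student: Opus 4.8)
The plan is to verify that $C_{SL}$ satisfies the three defining copula axioms, the only nontrivial one being $2$-increasingness. Writing $g(t) := \delta(t)/t$ on $(0,1]$, the defining formula reads $C_{SL}(u_1,u_2) = \min(u_1,u_2)\, g(\max(u_1,u_2))$, and the two asserted conditions become: $g$ is increasing (condition~1) and $g(t)/t = \delta(t)/t^2$ is decreasing (condition~2). Groundedness and the uniform-margin property are immediate from $\delta(0)=0$, $\delta(1)=1$ and $0 \le \delta(t) \le t$ (the latter giving $0 \le g \le 1$, so that $C_{SL} \le \min(u_1,u_2)$ and $C_{SL}\to 0$ on the faces $u_k=0$). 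Thus everything reduces to showing that the volume $V([a,b]\times[c,d]) := C_{SL}(b,d)-C_{SL}(b,c)-C_{SL}(a,d)+C_{SL}(a,c)$ is nonnegative for every subrectangle if and only if conditions~1 and~2 hold.

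First I would organise the volume computation according to the position of the rectangle relative to the diagonal $\{u_1=u_2\}$, since $C_{SL}$ is given by two different formulas on the two sides. If the rectangle lies entirely on one side (say $b\le c$, so $u_1\le u_2$ throughout), then $C_{SL}=u_1 g(u_2)$ there and the volume telescopes to $(b-a)\big(g(d)-g(c)\big)$, nonnegative exactly because $g$ is increasing; the mirror case $d\le a$ is symmetric. This shows condition~1 governs the non-straddling rectangles, and, taking such rectangles with $b\le c$ arbitrarily close to the diagonal, that $2$-increasingness forces $g$ to be increasing, so condition~1 is necessary.

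The substance is the straddling case, which I expect to be the main obstacle. Using the symmetry $C_{SL}(u_1,u_2)=C_{SL}(u_2,u_1)$ to reflect rectangles across the diagonal, one reduces to two configurations. In the configuration $a\le c< b\le d$ the four corners split two-and-two across the diagonal and the volume evaluates to $V=(b-a)g(d)+a\,g(c)-c\,g(b)$; here I would first discard $d$ using $g(d)\ge g(b)$ (condition~1) and then replace $g(c)$ using $b\,g(c)\ge c\,g(b)$ (condition~2, i.e.\ $g(c)/c\ge g(b)/b$), arriving at the clean bound $V \ge g(b)(b-a)(b-c)/b\ge 0$. In the remaining configuration $a\le c\le d\le b$ one finds $V=(d-c)g(b)-a\big(g(d)-g(c)\big)$, and condition~2 in the form $g(d)-g(c)\le \tfrac{d-c}{c}g(c)$, together with $a\le c$ and $g(c)\le g(b)$, yields $a\big(g(d)-g(c)\big)\le (d-c)g(b)$, hence $V\ge 0$. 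Thus conditions~1 and~2 are sufficient.

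It remains to prove that condition~2 is also necessary, which is the one place requiring a limiting argument rather than a single rectangle. Testing the volume on a diagonal square $[a,b]^2$ gives $V([a,b]^2)=b\,g(b)+a\,g(a)-2a\,g(b)\ge 0$, which, written with $h(t):=g(t)/t$, becomes $a^2 h(a)\ge (2ab-b^2)\,h(b)$. A single such inequality only yields $h(a)\ge \big(1-(\tfrac{b}{a}-1)^2\big)\,h(b)$ for $b$ near $a$, so to recover the full monotonicity $h(a)\ge h(b)$ for all $a\le b$ I would chain the inequality along a fine geometric partition $a=t_0<\dots<t_n=b$ with common ratio $\rho=(b/a)^{1/n}$: each step gives $h(t_i)\ge \rho(2-\rho)\,h(t_{i+1})=\big(1-(\rho-1)^2\big)h(t_{i+1})$, and since $h\ge 0$ these telescope to $h(a)\ge \prod_i\big(1-(\rho-1)^2\big)\,h(b)$, the product tending to $1$ as $n\to\infty$. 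Hence $h=\delta(t)/t^2$ is decreasing, completing the equivalence.
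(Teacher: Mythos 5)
The paper does not actually prove this proposition; it is quoted from the literature (the characterization is attributed to \citet[Thm.~4]{Durante.2008b}), so there is no in-paper argument to compare against. Judged on its own, your proof is correct and self-contained. The case analysis of rectangles relative to the diagonal is exhaustive up to the symmetry $C_{SL}(u_1,u_2)=C_{SL}(u_2,u_1)$, the volume formulas in both straddling configurations check out (e.g.\ in the configuration $a\le c<b\le d$ one gets $V\ge g(b)(b-a)(b-c)/b$ exactly as you claim), and you correctly identified the one genuinely delicate point: a single diagonal square $[a,b]^2$ only yields $h(a)\ge\bigl(1-(\rho-1)^2\bigr)h(b)$ with $\rho=b/a$, so the monotonicity of $h(t)=\delta(t)/t^2$ really does require the chaining along a geometric partition (and your product $\prod_i\bigl(1-(\rho-1)^2\bigr)=\bigl(1-((b/a)^{1/n}-1)^2\bigr)^n\to 1$ since $(\rho-1)^2=O(n^{-2})$); this nicely avoids any differentiability assumption on $\delta$. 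Two cosmetic points: in the configuration $a\le c<b\le d$ the corners do not split two-and-two across the diagonal --- only $(b,c)$ lies in $\{u_1>u_2\}$ --- though your volume formula is nonetheless the right one; and the degenerate rectangles touching the axes (where $g$ is undefined) should be dispatched explicitly, which is immediate since the corresponding copula values vanish. Your direct verification buys a proof that is elementary and independent of the reference, at the cost of a somewhat longer case analysis than simply invoking the cited characterization as the paper does.
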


By construction, the semilinear copula $C_{SL}$ has the function $\delta$ as its the diagonal, and its tail dependence function is given by
$$ \TD{(w_1, w_2)}{C_{SL}}    = \min(w_1, w_2) \cdot \lim\limits_{s\searrow 0} \frac{\delta(s \max(w_1, w_2))}{s\max(w_1, w_2)} 
                                = \min(w_1, w_2) \cdot \lim\limits_{t\searrow 0} \frac{\delta(t)}{t}  $$ 
where the limit exists by the assumptions on $\delta$. Therefore, as in the proof of Corollary~\ref{cor:FN_Bertino}, we conclude from Theorem~\ref{lemma:diagonal_copulas} the following result for semilinear copulas.

\begin{corollary}
For the class of semilinear copulas we have $$ C_1 \stdo C_2 \implies C_1 \loc C_2 ~. $$ 
\end{corollary}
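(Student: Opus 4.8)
The plan is to mimic the proof of Corollary~\ref{cor:FN_Bertino} almost verbatim, exploiting the fact that a semilinear copula is completely determined by its diagonal $\delta$ via the explicit formula \eqref{eq:semilinear_copula}. First I would invoke Theorem~\ref{lemma:diagonal_copulas}: since $C_1 \stdo C_2$, there is an $\epsilon > 0$ with $\delta_1(t) \leq \delta_2(t)$ for all $t \in [0, \epsilon]$, where $\delta_1, \delta_2$ denote the diagonals of the two semilinear copulas. The key observation is then that the map $\delta \mapsto C_{SL}$ given by \eqref{eq:semilinear_copula} is monotone: for fixed $(u_1, u_2)$, the value $C_{SL}(u_1, u_2) = \min(u_1, u_2) \cdot \delta(\max(u_1, u_2))/\max(u_1, u_2)$ depends on $\delta$ only through the single evaluation $\delta(\max(u_1, u_2))$, with a nonnegative coefficient $\min(u_1, u_2)/\max(u_1, u_2) \geq 0$.

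The main step is therefore to turn the pointwise ordering of the diagonals into the pointwise ordering of the copulas near $\boldm{0}$. Concretely, I would take $\boldm{u} = (u_1, u_2) \in B_\epsilon(\boldm{0}) \cap [0,1]^2$, so that in particular $\max(u_1, u_2) \leq \epsilon$ lies in the range where $\delta_1 \leq \delta_2$. Writing $m := \min(u_1, u_2)$ and $M := \max(u_1, u_2)$, I would compute
\begin{equation*}
    C_{SL}(\boldm{u}; C_1) = \frac{m}{M}\, \delta_1(M) \leq \frac{m}{M}\, \delta_2(M) = C_{SL}(\boldm{u}; C_2) ~,
\end{equation*}
using $\delta_1(M) \leq \delta_2(M)$ together with $m/M \geq 0$. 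This establishes $C_1(\boldm{u}) \leq C_2(\boldm{u})$ on $B_\epsilon(\boldm{0}) \cap [0,1]^2$, which is exactly the assertion $C_1 \loc C_2$.

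I do not expect any genuine obstacle here, since the argument is purely structural: the difficulty that Theorem~\ref{thm:eo_implies_cone_loo} warns about---irregular behaviour near the coordinate axes---is automatically absent because the semilinear formula is continuous and well-behaved all the way up to the axes, and the coefficient $m/M$ stays bounded in $[0,1]$. The one point requiring a word of care is the degenerate case $M = 0$, i.e.\ $\boldm{u} = \boldm{0}$, where the quotient $m/M$ is undefined; there both copulas vanish by the grounding property, so the inequality holds trivially and can be dispatched in a single sentence. The remark preceding the statement has already verified that $C_{SL}$ admits a tail dependence function (so that Theorem~\ref{lemma:diagonal_copulas} applies), which is the only nontrivial hypothesis needing to be checked before the diagonal comparison can be used.
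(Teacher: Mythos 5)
Your proof is correct and follows essentially the same route as the paper: invoke Theorem~\ref{lemma:diagonal_copulas} to obtain the local ordering of the diagonals, then use the explicit representation \eqref{eq:semilinear_copula} and its pointwise monotonicity in $\delta$ to transfer that ordering to the copulas near $\boldm{0}$. Your additional remarks on the existence of the tail dependence function and the degenerate case $\boldm{u}=\boldm{0}$ are correct and merely make explicit what the paper leaves implicit.
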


\subsection{Extreme value copulas}

The next class of copulas we consider are the so-called extreme value copulas. We will see that, in the bivariate case, they are uniquely characterized by their tail dependence function. This allows us to construct copulas with a given tail dependence function.

\begin{definition} 
Suppose $\TDLn$ is a bivariate tail dependence function.
Then 
\begin{equation} \label{eqn:extreme_value_copula_def}
	C^{EV}(u_1, u_2 \, ; \TDLn)	:= \exp \rbraces{\log(u_1) + \log(u_2) + \Lambda(-\log(u_1), -\log(u_2))} 
\end{equation}
is a $2$-copula, called an extreme value copula. The survival copula $\widehat{C^{EV}}$ of $C^{EV}$, defined as
\begin{equation*}
	C^{LEV}(u_1, u_2 \, ; \TDLn) := \widehat{C^{EV}}(u_1, u_2 \, ; \TDLn) := u_1 + u_2 - 1 + C^{EV}(1-u_1, 1-u_2)	~,
\end{equation*}
is called lower extreme value copula. 
\end{definition}

That $C^{EV}$ is indeed a copula is shown, e.g., in \citet[Thm.~6.6.7]{Durante.2015}. The next proposition states that the lower extreme value copula has $\Lambda$ as its tail dependence function; see also \citet{Durante.2015}.

\begin{proposition} \label{prop:extreme_value_copula}
If $\TDLn$ is a bivariate tail dependence function and $C^{LEV}$ its corresponding lower extreme value copula then
\begin{equation*}
	\TDL{\cdot}{C^{LEV}}	= \TDLn(\cdot)	~.
\end{equation*}
\end{proposition}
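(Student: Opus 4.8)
The plan is to compute the defining limit $\lim_{s \searrow 0} C^{LEV}(s\boldm{w})/s$ directly from the explicit formulas, reducing the survival copula to $C^{EV}$ and expanding everything to first order in $s$. First I would unfold the definition of the survival copula: writing $\boldm{w} = (w_1, w_2)$, we have
\begin{equation*}
    C^{LEV}(s w_1, s w_2) = s w_1 + s w_2 - 1 + C^{EV}(1 - s w_1, 1 - s w_2),
\end{equation*}
so the whole computation hinges on a first-order expansion of $C^{EV}$ near the vertex $(1,1)$.

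For the second step I would substitute $u_i = 1 - s w_i$ into \eqref{eqn:extreme_value_copula_def} and use the elementary expansion $-\log(1 - x) = x + o(x)$ as $x \searrow 0$, giving $-\log(1 - s w_i) = s w_i + o(s)$. At this point the structural properties of $\Lambda$ from Proposition~\ref{prop:tail_dep_func} enter: positive homogeneity of order $1$ gives $\Lambda(s w_1, s w_2) = s\Lambda(w_1, w_2)$, while the Lipschitz estimate absorbs the perturbations inside the argument, since $\abs{\Lambda(s w_1 + o(s), s w_2 + o(s)) - \Lambda(s w_1, s w_2)} = o(s)$. Combining these, the exponent $\log u_1 + \log u_2 + \Lambda(-\log u_1, -\log u_2)$ equals $s\bigl(\Lambda(w_1, w_2) - w_1 - w_2\bigr) + o(s)$, which tends to $0$, so that $\exp(x) = 1 + x + o(x)$ yields
\begin{equation*}
    C^{EV}(1 - s w_1, 1 - s w_2) = 1 - s(w_1 + w_2) + s\Lambda(w_1, w_2) + o(s).
\end{equation*}

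In the final step I would reassemble the survival copula: inserting the last expansion into the first display, the constant term and the linear terms $\pm s(w_1 + w_2)$ cancel exactly, leaving $C^{LEV}(s\boldm{w}) = s\Lambda(w_1, w_2) + o(s)$. Dividing by $s$ and letting $s \searrow 0$ then gives $\TD{\boldm{w}}{C^{LEV}} = \Lambda(w_1, w_2)$, which is the claimed identity $\TD{\cdot}{C^{LEV}} = \TDLn(\cdot)$.

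I anticipate that the main obstacle is the bookkeeping of error terms rather than any deep idea: one must verify that the $o(s)$ occurring inside the arguments of $\Lambda$ contributes only $o(s)$ after applying $\Lambda$, which is exactly where the Lipschitz continuity from Proposition~\ref{prop:tail_dep_func} is indispensable. A secondary technical point is the treatment of the boundary directions with $w_1 = 0$ or $w_2 = 0$; here I would either check that the estimates hold verbatim (they do, as homogeneity and the Lipschitz bound still apply) or first establish the identity on $(0, \infty)^2$ and then extend it to all of $\R_+^2$ using the continuity of both tail dependence functions.
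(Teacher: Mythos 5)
Your proof is correct. The paper itself gives no proof of Proposition~\ref{prop:extreme_value_copula} --- it only refers to \citet{Durante.2015} --- so there is no in-paper argument to compare against; your direct first-order expansion is precisely the standard computation one would supply, and it is self-contained given Proposition~\ref{prop:tail_dep_func}. The two points that genuinely need care are handled properly: the Lipschitz bound $\abs{\TDLn(\boldm{v}) - \TDLn(\boldm{w})} \leq \sum_{k} \abs{v_k - w_k}$ converts the $o(s)$ perturbation in the arguments $-\log(1-sw_i) = sw_i + o(s)$ into an $o(s)$ perturbation of $\TDLn$, and positive homogeneity then extracts the factor $s$, after which the affine terms cancel against $sw_1 + sw_2 - 1$. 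The boundary directions are even simpler than your fallback suggests: if, say, $w_1 = 0$, groundedness gives $C^{LEV}(0, sw_2) = 0$, while $0 \leq \TDLn(0, w_2) \leq \min\curly{0, w_2} = 0$ by Proposition~\ref{prop:tail_dep_func}, so both sides vanish and no separate limiting argument is needed.
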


\begin{remark}
While the construction of a $d$-copula with a given tail dependence function is quite straightforward in dimension $d=2$, whereas the general construction for $d > 2$ is more involved and can be found in \citet[Prop.~6]{Jaworski.2006}.
\end{remark}

\begin{theorem} \label{prop:evc_oo}
The tail dependence order is equivalent to the lower orthant order for lower extreme value $2$-copulas.
That is, 
\begin{equation*}
	\TD{\boldm{w}}{C_1}	\leq \TD{\boldm{w}}{C_2} \Leftrightarrow \widehat{C}_1 \leq \widehat{C}_2 \Leftrightarrow C_1 \leq C_2	\Leftrightarrow C_1 \loc C_2	
\end{equation*}
holds for all lower extreme-value $2$-copulas $C_1$ and $C_2$, where $\widehat{C}_1$ denotes the survival copula of $C_1$. 
\end{theorem}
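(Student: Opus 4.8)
The plan is to show the four conditions are equivalent by arranging them into a single cyclic chain of implications, so that no condition needs to be compared directly with every other. First I would fix notation: writing $\Lambda_i := \TD{\wc}{C_i}$, Proposition~\ref{prop:extreme_value_copula} identifies each lower extreme value copula as $C_i = C^{LEV}(\wc; \Lambda_i)$ with survival copula $\widehat{C}_i = C^{EV}(\wc; \Lambda_i)$. In this notation the four conditions read, respectively, $\Lambda_1 \leq \Lambda_2$ on $\R_+^2$, $\widehat{C}_1 \leq \widehat{C}_2$, $C_1 \leq C_2$, and $C_1 \loc C_2$.

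The decisive step, and the only one exploiting the extreme value structure, is the equivalence of the first two conditions. I would substitute the explicit formula \eqref{eqn:extreme_value_copula_def}: for $(u_1, u_2) \in (0,1)^2$ it gives $\widehat{C}_i(u_1, u_2) = \exp(\log u_1 + \log u_2 + \Lambda_i(-\log u_1, -\log u_2))$. Since $\exp$ is strictly increasing and the summand $\log u_1 + \log u_2$ is common to both sides, $\widehat{C}_1 \leq \widehat{C}_2$ at $(u_1, u_2)$ is equivalent to $\Lambda_1 \leq \Lambda_2$ at $(-\log u_1, -\log u_2)$. As $(u_1,u_2)$ sweeps the open square $(0,1)^2$, the transformed point $(-\log u_1, -\log u_2)$ sweeps all of $(0, \infty)^2$, so ordering the survival copulas on the interior is exactly ordering the tail dependence functions on $(0,\infty)^2$. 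The remaining bookkeeping on the boundary I expect to be routine: on the coordinate axes both $\Lambda_i$ vanish, since $0 \leq \Lambda_i \leq \Lambda^+ = \min$ by Proposition~\ref{prop:tail_dep_func}, while on $\partial([0,1]^2)$ the two copulas coincide because they are grounded with uniform margins; hence both orderings extend from the interior with equality on the boundary.

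For the equivalence of the second and third conditions I would not use the extreme value structure at all, relying instead on the general bivariate survival transform. From $\widehat{C}_i(u_1, u_2) = u_1 + u_2 - 1 + C_i(1 - u_1, 1 - u_2)$ one obtains $\widehat{C}_2 - \widehat{C}_1 = (C_2 - C_1)(1 - u_1, 1 - u_2)$, and after the substitution $(s,t) = (1 - u_1, 1 - u_2)$ the inequality $\widehat{C}_1 \leq \widehat{C}_2$ on $[0,1]^2$ becomes precisely $C_1 \leq C_2$ on $[0,1]^2$.

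Finally I would close the cycle. The implication $C_1 \leq C_2 \implies C_1 \loc C_2$ is immediate because a global inequality restricts to any neighbourhood of $\boldm{0}$, and the implication $C_1 \loc C_2 \implies \Lambda_1 \leq \Lambda_2$ is nothing but the already-recorded fact that $\loc$ implies $\tdo$. This completes the cycle and hence the equivalence of all four conditions. I do not anticipate a genuine obstacle: the entire statement is powered by the rigidity of extreme value copulas, namely that $\Lambda$ determines the copula through \eqref{eqn:extreme_value_copula_def}, which is what upgrades the purely local comparison of tail dependence functions to a global comparison of the copulas themselves. The one place meriting a careful word is the boundary analysis in the first equivalence, and that is handled by the vanishing of $\Lambda$ on the axes.
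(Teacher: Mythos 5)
Your proposal is correct and follows essentially the same route as the paper: the paper's (very terse) proof cites exactly the three ingredients you elaborate, namely the explicit formula \eqref{eqn:extreme_value_copula_def}, Proposition~\ref{prop:extreme_value_copula}, and the equivalence $C_1 \leq C_2 \Leftrightarrow \widehat{C}_1 \leq \widehat{C}_2$ for $2$-copulas. Your write-up simply fills in the details (monotonicity of $\exp$, the change of variables $(-\log u_1,-\log u_2)$, and the boundary check), all of which are sound.
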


\begin{proof}
The proof follows immediately from \eqref{eqn:extreme_value_copula_def}, Proposition~\ref{prop:extreme_value_copula} and the fact that, for all $2$-copulas, $C_1 \leq C_2$ is equivalent to $\widehat{C}_1 \leq \widehat{C}_2$.
\end{proof}

\subsection{Archimedean copulas}

Another important class of copulas are the Archimedean copulas which are determined by a single univariate function, the so-called generator. Archimedean copulas have become a popular tool both for theoretical as well as practical considerations. 
Their simple form allows for the explicit calculation of many dependence measures, for instance, thus enabling a precise understanding of their induced dependencies.
Empirical studies have suggested that many pairs of stocks can be modelled quite successfully by Archimedean copulas; see, e.g., \citet{Buecher.2012} and \citet{Trede.2013}.

\begin{definition}
We call a function $\phi: [0, 1] \rightarrow [0, \infty]$ a generator if it is continuous, strictly decreasing and fulfils $\phi(1) = 0$.
Furthermore, we call $\phi$ strict if 
\begin{equation*}
	\lim\limits_{s \searrow 0 } \phi(s) = \infty ~.
\end{equation*}
\end{definition}

\begin{definition} \label{def:archimedean_copula}
A $d$-copula $C$ is called Archimedean if there exists a generator $\phi$ such that
\begin{equation} \label{def:arch_copula}
	C(\boldm{u}) = \phi^{[-1]} \rbraces{\sum\limits_{k = 1}^d \phi(u_k)}
\end{equation}
where $\phi^{[-1]}(x) := \inf \set{t \in [0, 1]}{\phi(t) \leq x}$ denotes the generalized inverse of $\phi$. 
\end{definition}

To obtain a closed form for the tail dependence function of an Archimedean copula, its generator $\phi$ is often assumed to be regularly varying. 
This condition poses in practice only a slight restriction since it is fulfilled by virtually all relevant generators as demonstrated by \citet{Charpentier.2009}.
For an extensive treatment of regularly varying functions and related topics we refer to \citet{Bingham.1987}. 

\begin{definition} \label{def:reg_var}
A positive measurable function $f$ on $\R_+$ is called regularly varying at $\infty$ with index $\alpha \in \R$ if
\begin{equation*} %\label{eqn:regular_variation}
    \lim\limits_{x \rightarrow \infty} \frac{f(tx)}{f(x)} = t^\alpha 
\end{equation*}
holds for all $t \in \R_+$. 
$f$ is said to be slowly varying at $\infty$ if $\alpha$ equals $0$, and rapidly varying at $\infty$ (i.e.\ regularly varying with parameter $\alpha = \infty$) if
\begin{equation*}
    \lim\limits_{x \rightarrow \infty} \frac{f(tx)}{f(x)} = \begin{cases} 0 & t < 1 \\ 1 & t = 1 \\ \infty & t > 1 \end{cases} ~.
\end{equation*}
When no ambiguity is possible, we will call all slowly, regularly and rapidly varying functions regularly varying functions with $\alpha \in \overline{\R}$.

Similarly, we call $f$ regularly varying at 0 with parameter $\alpha \in \overline{\R}$ if $x \mapsto f \rbraces{\frac{1}{x}}$ is regularly varying at $\infty$ with parameter $-\alpha$. 
\end{definition}

We now state an explicit formula for the tail dependence function of an Archimedean copula in terms of its Archimedean generator; the proof goes by direct calculation.

\begin{lemma} \label{lma:regVarTD}
If $C$ is an Archimedean $d$-copula with generator $\phi$ which is regularly varying at $0$ with parameter $-\alpha$, then
\begin{equation*}
    \TDL{\boldm{w}}{C} = \begin{cases}
    						0																	&\text{ if } \alpha = 0				\\										
    						\rbraces{\sum\limits_{k = 1}^d w_k^{-\alpha}}^{-\frac{1}{\alpha}} 	&\text{ if } \alpha \in (0, \infty)\\
    						\min\limits_{k = 1, \ldots, d} w_k									&\text{ if } \alpha = \infty
    			  		\end{cases} ~.
\end{equation*}
\end{lemma}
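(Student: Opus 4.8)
The plan is to compute the limit directly from the definition $\TDL{\boldm{w}}{C} = \lim_{s \searrow 0} C(s\boldm{w})/s$ using the Archimedean representation \eqref{def:arch_copula}. Writing $C(s\boldm{w}) = \phi^{[-1]}\rbraces{\sum_{k=1}^d \phi(s w_k)}$, I would first reduce everything to the behaviour of $\phi$ near $0$ and of $\phi^{[-1]}$ near $\infty$. The key idea is to rewrite the ratio $C(s\boldm{w})/s$ by introducing the auxiliary quantity $\phi(s)$, which tends to $\infty$ as $s \searrow 0$ whenever $\alpha > 0$ (so that $\phi$ is strict), and to exploit the regular variation of $\phi$ at $0$ with index $-\alpha$, i.e.\ $\phi(s w_k)/\phi(s) \to w_k^{-\alpha}$ as $s \searrow 0$.

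The main algebraic step is the substitution $C(s\boldm{w})/s = \phi^{[-1]}\rbraces{\phi(s) \cdot \tfrac{1}{\phi(s)}\sum_{k} \phi(sw_k)} / s$. Using the pointwise limit $\tfrac{1}{\phi(s)}\sum_k \phi(sw_k) \to \sum_k w_k^{-\alpha} =: A(\boldm{w})$ and setting $x = \phi(s)$, the expression becomes $\phi^{[-1]}\rbraces{x \cdot A(\boldm{w})(1+o(1))}/\phi^{[-1]}(x)$, since $s = \phi^{[-1]}(\phi(s)) = \phi^{[-1]}(x)$. The generalized inverse $\phi^{[-1]}$ of a function regularly varying at $0$ with index $-\alpha$ is itself regularly varying at $\infty$ with index $-1/\alpha$ (this is the standard inversion theorem for regularly varying functions, see \citet{Bingham.1987}), so the ratio converges to $A(\boldm{w})^{-1/\alpha} = \rbraces{\sum_k w_k^{-\alpha}}^{-1/\alpha}$, giving the middle case. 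For $\alpha = 0$ the generator $\phi$ is slowly varying, $\phi^{[-1]}$ is rapidly varying, and the same computation forces the limit to $0$; for $\alpha = \infty$ the generator is rapidly varying, $\phi^{[-1]}$ is slowly varying, and the dominant term in $\sum_k \phi(sw_k)$ comes from the smallest $w_k$, yielding $\min_k w_k$.

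The hard part will be justifying the interchange of the two limiting operations: the inner sum converges to $A(\boldm{w})$ only as $s \searrow 0$, yet I simultaneously feed its argument into $\phi^{[-1]}$ evaluated near $\infty$. Making this rigorous requires a uniform-convergence argument—the regular variation of $\phi^{[-1]}$ holds uniformly on compact subsets of $(0,\infty)$ away from the argument (by the Uniform Convergence Theorem for regularly varying functions), which lets me replace $A(\boldm{w})(1+o(1))$ by $A(\boldm{w})$ inside $\phi^{[-1]}$ with a controllable error. A secondary technicality is the treatment of boundary directions where some $w_k = 0$, and the degenerate/non-strict cases ($\alpha \in \{0,\infty\}$) where $\phi^{[-1]}$ may hit the boundary of $[0,1]$; these should be handled by monotonicity and squeezing rather than by the regular-variation machinery directly. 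Since the statement advertises the proof as going "by direct calculation," I expect the intended argument simply performs the substitution and invokes the standard inversion and uniform convergence theorems for regularly varying functions from \citet{Bingham.1987}.
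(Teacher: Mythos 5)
The paper offers no proof of this lemma beyond the remark that it ``goes by direct calculation,'' and your proposal is precisely that calculation: substitute the Archimedean form into the definition of $\TDL{\boldm{w}}{C}$, use regular variation of $\phi$ at $0$ to get $\phi(sw_k)/\phi(s)\to w_k^{-\alpha}$, and invoke the inversion and uniform convergence theorems for regularly varying functions to pass the limit through $\phi^{[-1]}$. Your approach is correct and coincides with the intended one, including the correct identification of the limit-interchange and the boundary cases $\alpha\in\{0,\infty\}$ as the points requiring care.
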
 

In the following we restrict our subsequent analysis to strict generators $\phi$, that is, generators fulfilling 
\begin{equation*}
	\lim\limits_{s \searrow 0} \phi(s) = \infty ~,
\end{equation*}
as the Archimedean copula is otherwise necessarily equal to zero in a neighbourhood near $\boldm{0}$ (see \citet{Charpentier.2009}).
The following proposition gives a reformulation of this fact in our language of stochastic orders. 

\begin{proposition}\label{lma:Charpentier_strict} 
Let $C$ be an Archimedean $d$-copula with nonstrict generator $\phi$ and let $\widetilde{C}$ be an arbitrary $d$-copula. 
Then $C \tdo \widetilde{C}$ is equivalent to $C \loc \widetilde{C}$. 
\end{proposition}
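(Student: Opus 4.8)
The plan is to reduce everything to the single structural fact announced just before the proposition, namely that a nonstrict Archimedean copula vanishes identically on a full neighbourhood of $\boldm{0}$. Once this is in hand, I would observe that both orderings $C \tdo \widetilde{C}$ and $C \loc \widetilde{C}$ hold \emph{unconditionally} for every $d$-copula $\widetilde{C}$, so that each trivially implies the other and the asserted equivalence is immediate.

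First I would make the vanishing fact precise rather than merely cite it. Since $\phi$ is nonstrict, $M := \lim_{s \searrow 0} \phi(s)$ is finite, and because $\phi$ is continuous and strictly decreasing with $\phi(1) = 0$ we have $0 < M < \infty$ and the generalized inverse satisfies $\phi^{[-1]}(x) = 0$ for every $x \geq M$. On the set $B_\varepsilon(\boldm{0}) \cap [0,1]^d$ every coordinate obeys $u_k \leq \varepsilon$, so monotonicity of $\phi$ yields
$$ \sum_{k=1}^d \phi(u_k) \geq d\, \phi(\varepsilon) ~. $$
As $\varepsilon \searrow 0$ the right-hand side tends to $dM$, which exceeds $M$ because $d \geq 2$; hence there is an $\varepsilon > 0$ with $\sum_{k} \phi(u_k) \geq d\,\phi(\varepsilon) > M$ throughout $B_\varepsilon(\boldm{0}) \cap [0,1]^d$, and therefore $C(\boldm{u}) = \phi^{[-1]}\rbraces{\sum_{k} \phi(u_k)} = 0$ on that neighbourhood.

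From this I would read off both conclusions. On the one hand, $C(s\boldm{w}) = 0$ for all sufficiently small $s > 0$, so $\TDL{\boldm{w}}{C} = 0$ for every $\boldm{w} \in \R_+^d$; since every tail dependence function is nonnegative (Proposition~\ref{prop:tail_dep_func}), we get $\TDL{\boldm{w}}{C} = 0 \leq \TDL{\boldm{w}}{\widetilde{C}}$, i.e.\ $C \tdo \widetilde{C}$ for every $\widetilde{C}$. On the other hand, $C(\boldm{u}) = 0 \leq \widetilde{C}(\boldm{u})$ on the ball just produced, which is exactly $C \loc \widetilde{C}$. As both sides of the biconditional are always true, they are equivalent, completing the argument.

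The only genuinely delicate point, and the step I would treat most carefully, is that the vanishing must hold on a genuine $d$-dimensional ball around $\boldm{0}$ rather than merely along individual rays. It is precisely the uniform smallness of all coordinates on $B_\varepsilon(\boldm{0})$, together with $d \geq 2$, that forces $\sum_{k} \phi(u_k)$ above the finite threshold $M$ and thereby delivers the local order $\loc$ and not just the ray-based order $\tdo$; everything else follows directly from nonnegativity and the definitions of the two orders.
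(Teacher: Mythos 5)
Your proof is correct and follows essentially the same route as the paper: both arguments reduce the equivalence to the fact that a nonstrict Archimedean copula vanishes on a neighbourhood of $\boldm{0}$, so that $C \tdo \widetilde{C}$ and $C \loc \widetilde{C}$ each hold unconditionally. The only difference is that you prove this vanishing fact directly from the finiteness of $\lim_{s\searrow 0}\phi(s)$ and the behaviour of $\phi^{[-1]}$, whereas the paper simply cites Section~3.2 of \citet{Charpentier.2009}; your added derivation is sound.
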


\begin{proof}
Let $C$ be an Archimedean $d$-copula with nonstrict generator $\phi$. 
Following Section~3.2 in \citet{Charpentier.2009}, there exists an $\varepsilon > 0$ such that
\begin{equation*}
    C(\boldm{u}) = 0    \quad \text{ for all } \boldm{u} \in B_\varepsilon(\boldm{0}) \cap [0, 1]^d
\end{equation*}
and necessarily $C \loc \widetilde{C}$ for all $d$-copulas $\widetilde{C}$. 
Moreover, $\TD{\boldm{w}}{C}$ is identically zero for all $\boldm{w} \in \R_+^d$ and therefore $\TD{\boldm{w}}{C} = 0 \leq \TD{\boldm{w}}{\widetilde{C}}$ for all copulas $\widetilde{C}$.
Thus $C \tdo \widetilde{C}$ is equivalent to $C \loc \widetilde{C}$. 
\end{proof}

Note that for strict generators the generalized inverse reduces to the usual inverse.
Let us now state the main theorem before proving the necessary technical results.

\begin{theorem} \label{thm:archimedean_loc_loo}
Let $C_1$ and $C_2$ be Archimedean $d$-copulas with regularly varying generators $\phi_1$ and $\phi_2$, respectively.
Then
\begin{equation*}
	C_1 \stdo C_2 \implies C_1 \loc C_2 ~. 
\end{equation*}
\end{theorem}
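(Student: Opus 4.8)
The plan is to reduce the statement about copulas to a statement about their one-dimensional generators, exploiting the fact that an Archimedean $d$-copula is completely determined by its generator $\phi$ via \eqref{def:arch_copula}. By Proposition~\ref{lma:Charpentier_strict}, I may restrict attention to strict generators, so $\phi_1$ and $\phi_2$ are genuine inverses and both copulas are strictly positive near $\boldm{0}$. The first step is to translate $C_1 \stdo C_2$ into information about the regular-variation indices. Writing $\phi_i$ as regularly varying at $0$ with parameter $-\alpha_i$, Lemma~\ref{lma:regVarTD} gives the explicit tail dependence functions; the strict ordering $\TD{\boldm{w}}{C_1} < \TD{\boldm{w}}{C_2}$ on $(0,\infty)^d$ then forces $\alpha_1 \leq \alpha_2$, and in fact handles the degenerate cases $\alpha_1 = 0$ or $\alpha_2 = \infty$ separately, since there $C_1$ vanishes near $\boldm{0}$ or $C_2$ equals $\min_k u_k$ near $\boldm{0}$, making the local ordering immediate.

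**Reducing to an ordering of generators.**

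The key observation is that $C_1(\boldm{u}) \leq C_2(\boldm{u})$ is equivalent, after applying the strictly decreasing bijections, to an inequality between the generators. Since each $\phi_i$ is strictly decreasing with strict inverse $\Inv{\phi_i}$, and since $C_i(\boldm{u}) = \Inv{\phi_i}\big(\sum_k \phi_i(u_k)\big)$, the desired inequality $C_1 \leq C_2$ near $\boldm{0}$ should follow once I establish that the composite generator $g := \phi_2 \circ \Inv{\phi_1}$ is, near $\infty$, sub- or super-additive in the appropriate direction, or more directly that $\phi_2 \leq c\,\phi_1$ (equivalently $\Inv{\phi_1} \leq \Inv{\phi_2}$ up to scaling) on a neighbourhood of $0$. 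I would normalize the generators—each is determined only up to a positive multiplicative constant—and then show that the strict inequality of the indices, or equality of indices together with a comparison of the slowly varying parts, yields $\phi_1(t)/\phi_2(t) \to L \in (0,\infty]$ in a way that is compatible with $C_1 \leq C_2$ locally. The monotonicity of $\Inv{\phi_i}$ converts the generator inequality into the copula inequality pointwise for $\boldm{u}$ in a small ball.

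**The main obstacle.**

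The hard part will be the boundary case $\alpha_1 = \alpha_2 =: \alpha \in (0,\infty)$, where both tail dependence functions coincide as $\big(\sum_k w_k^{-\alpha}\big)^{-1/\alpha}$, and the strict order $\stdo$ says nothing new at leading order. Here the comparison must be driven entirely by the \emph{slowly varying} factors of $\phi_1$ and $\phi_2$, which live at a subleading scale invisible to the tail dependence function itself. I expect to need a genuine second-order or slowly-varying argument: writing $\phi_i(t) = t^{-\alpha} \ell_i(t)$ with $\ell_i$ slowly varying at $0$, I would have to deduce from $C_1 \stdo C_2$ a local ordering of $\ell_1$ against $\ell_2$ and propagate it through the inverse. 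This is the step where the uniform Lipschitz/regular-variation control (via the representation in Theorem~\ref{thm:concave_quasi_copula} and the uniform convergence theorem for regularly varying functions from \citet{Bingham.1987}) becomes essential, and where one must rule out the kind of axis-hugging pathology exhibited in Example~\ref{example:eo_doesnotimply_loo}; the additional regularity provided by regular variation is precisely what excludes it.
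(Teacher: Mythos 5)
Your overall shape (reduce to the generators, use the regular-variation indices, control a composite of the form $\phi_1\circ\Inv{\phi_2}$) matches the paper's strategy, but the proposal misidentifies where the difficulty lies and leans on two claims that are false. First, the case you single out as the main obstacle, $\alpha_1=\alpha_2=\alpha\in(0,\infty)$, cannot occur under the hypothesis: by Lemma~\ref{lma:regVarTD} the tail dependence function of an Archimedean copula with regularly varying generator depends \emph{only} on $\alpha$, and for fixed $\boldm{w}\in(0,\infty)^d$ the value $\bigl(\sum_k w_k^{-\alpha}\bigr)^{-1/\alpha}$ is strictly increasing in $\alpha$. Hence $C_1\stdo C_2$ forces $\alpha_1<\alpha_2$ strictly (this is exactly Lemma~\ref{lemma:archimedean_regular_varying}); if $\alpha_1=\alpha_2$ the tail dependence functions coincide and the strict order fails. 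No second-order or slowly-varying comparison is needed, and indeed the statement makes no claim in that regime. Second, your dispatch of the degenerate cases is wrong: $\alpha_1=0$ means the generator is slowly varying at $0$, not nonstrict, and does not imply $C_1$ vanishes near $\boldm{0}$ (the independence copula has generator $-\log$, which is slowly varying); likewise $\alpha_2=\infty$ gives $\TD{\boldm{w}}{C_2}=\min_k w_k$ but not $C_2=\min_k u_k$ near $\boldm{0}$ ($C^+$ is not even Archimedean). These cases still require the general argument.

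The remaining gap is that the mechanism actually driving the conclusion is only gestured at, and the ``more direct'' alternative you offer does not work: a pointwise bound $\phi_2\le c\,\phi_1$ near $0$ is neither necessary nor sufficient for $C_1\loc C_2$ (generators are only determined up to a positive scalar, and the orthant order of Archimedean copulas is governed by subadditivity of $\phi_1\circ\Inv{\phi_2}$, not by pointwise comparison of generators). What the paper proves is: $C_1\loc C_2$ follows from subadditivity of $\phi_1\circ\Inv{\phi_2}$ near $\infty$ (Lemma~\ref{prop:loc_ult_subadditive}); this subadditivity follows if the \emph{ratio} $\phi_1/\phi_2$ is increasing near $0$ (Lemma~\ref{prop:ndAg}); and that monotonicity is extracted from $\alpha_1<\alpha_2$ via the derivative characterization of regular variation, $\lim_{s\searrow 0} s\phi_i'(s)/\phi_i(s)=-\alpha_i$, which gives
\begin{equation*}
\lim_{s\searrow 0}\frac{s\,\psi'(s)}{\psi(s)}=\alpha_2-\alpha_1>0
\qquad\text{for } \psi:=\frac{\phi_1}{\phi_2}~,
\end{equation*}
together with an absolute-continuity argument to integrate $\psi'>0$. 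Your proposal would need to supply this chain (or an equivalent) to be complete; as written, the limit $\phi_1(t)/\phi_2(t)\to L$ alone does not yield the local ordering of the copulas.
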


We will briefly outline the strategy of the proof, which is given at the end of this section. The proof develops a localized version of \citet[Ch.~4.4]{Nelsen.2006} and consists of the following steps: 
\begin{enumerate}
	\item $C_1 \loc C_2$ whenever $(\phi_1 \circ \Inv{\phi_2})$ is subadditive near $\infty$ (see Lemma~\ref{prop:loc_ult_subadditive}).
	\item $(\phi_1 \circ \Inv{\phi_2})$ is subadditive near $\infty$ if $\frac{\phi_1(x)}{\phi_2(x)}$ is increasing near $0$ (see Lemma~\ref{prop:ndAg}).
	\item $\lambda(C_1) < \lambda(C_2)$ implies that $\frac{\phi_1(x)}{\phi_2(x)}$ is increasing near $0$.
\end{enumerate}

In \citet{Nelsen.2006} the lower orthant ordering of two Archimedean copulas $C_1$ and $C_2$ is characterized in terms of the global subadditivity of $(\phi_1 \circ \Inv{\phi_2})$. 
We can relax this condition and only require subadditivity for large values to ensure the local orthant ordering $\loc$.
Such a localized version has previously also been considered in \citet{Hua.2012} where the generator is assumed to be the Laplace transform of a positive random variable.

\begin{definition}
We say a function $f: \R_+ \rightarrow \R_+$ is subadditive near $\infty$ if there exists an $M \geq 0$ such that 
\begin{equation*}
    f(x + y) \leq f(x) + f(y) 
\end{equation*}
holds for all $x, y \in [M, \infty)$.
\end{definition}

\begin{lemma} \label{prop:loc_ult_subadditive}
Let $C_1$ and $C_2$ be Archimedean $d$-copulas generated by $\phi_1$ and $\phi_2$, respectively. 
Then $C_1 \loc C_2$ whenever $(\phi_1 \circ \Inv{\phi_2})$ is subadditive near $\infty$.
\end{lemma}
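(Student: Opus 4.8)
The plan is to localize the classical subadditivity characterization of the lower orthant order for Archimedean copulas (\citet[Ch.~4.4]{Nelsen.2006}) via a change of variables. Since we restrict to strict generators, $\gInv{\phi_j}$ is the genuine inverse $\Inv{\phi_j}$ and each $\phi_j$ is a strictly decreasing bijection onto $[0,\infty]$. Writing $g := \phi_1 \circ \Inv{\phi_2}$ and, for $\boldm{u} \in (0,1]^d$, setting $x_k := \phi_2(u_k) \in [0,\infty)$ so that $\phi_1(u_k) = g(x_k)$, one has
\begin{equation*}
	C_1(\boldm{u}) = \Inv{\phi_1}\Big( \sum_{k=1}^d g(x_k) \Big), \qquad C_2(\boldm{u}) = \Inv{\phi_2}\Big( \sum_{k=1}^d x_k \Big).
\end{equation*}
Applying the strictly decreasing function $\phi_1$ to the desired inequality $C_1(\boldm{u}) \le C_2(\boldm{u})$ reverses it and shows that, for such $\boldm{u}$, it is \emph{equivalent} to the subadditivity-type inequality $g\big( \sum_{k=1}^d x_k \big) \le \sum_{k=1}^d g(x_k)$.

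Next I would convert the hypothesis---subadditivity of $g$ near $\infty$, say $g(x+y) \le g(x) + g(y)$ for all $x,y \ge M$---into precisely this inequality for arguments that are uniformly large. The crucial point is that, because the $x_k$ are nonnegative, every partial sum $x_1 + \dots + x_j$ remains $\ge M$ once $x_1 \ge M$. An induction on the number of summands then applies: grouping $g\big( (x_1 + \dots + x_j) + x_{j+1} \big)$ and using subadditivity on $[M,\infty)$ at each step yields $g\big( \sum_{k=1}^d x_k \big) \le \sum_{k=1}^d g(x_k)$ whenever all $x_k \ge M$.

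Finally I would transfer the condition ``all $x_k \ge M$'' into a genuine neighbourhood of $\boldm{0}$. Taking $\varepsilon := \Inv{\phi_2}(M) > 0$, any $\boldm{u} \in B_\varepsilon(\boldm{0}) \cap (0,1]^d$ satisfies $u_k < \varepsilon$ and hence $x_k = \phi_2(u_k) > M$ by monotonicity of $\phi_2$; combining the first two steps gives $C_1(\boldm{u}) \le C_2(\boldm{u})$, while points with some $u_k = 0$ are handled at once since both copulas are grounded and vanish there. This proves $C_1 \loc C_2$. I expect the main obstacle to be the bookkeeping in the second step: ordinary pairwise subadditivity does not by itself iterate to $d$ summands, and it is exactly the nonnegativity of the $x_k$ (equivalently $\phi_2 \ge 0$) that keeps the running partial sums above the threshold $M$ and makes the induction valid.
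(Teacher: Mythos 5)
Your argument is correct and follows essentially the same route as the paper's proof: the substitution $x_k := \phi_2(u_k)$, the observation that $C_1(\boldm{u}) \leq C_2(\boldm{u})$ transforms under $\phi_1$ into the $d$-fold subadditivity inequality for $g = \phi_1 \circ \Inv{\phi_2}$, and the choice $\varepsilon := \Inv{\phi_2}(M)$. You are in fact slightly more careful than the paper, which asserts the $d$-summand inequality directly from pairwise subadditivity near $\infty$; your induction, using that all partial sums stay above $M$ because each $x_k \geq M \geq 0$, is exactly the bookkeeping needed to justify that step.
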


\begin{proof}
Consider $f: \R_+ \rightarrow \R_+$ defined by $f(x) := (\phi_1 \circ \Inv{\phi_2})(x)$. 
As $f$ is subadditive near $\infty$, we have
\begin{equation*}
    f \rbraces{\sum\limits_{\ell = 1}^d x_\ell} \leq \sum\limits_{\ell = 1}^d f(x_\ell) \qquad \text{ for all } x_1, \ldots, x_d \geq M ~.
\end{equation*}
Applying the strictly decreasing function $\Inv{\phi_1}$ to the above inequality yields
\begin{equation*} 
    \Inv{\phi_2}\rbraces{\sum\limits_{\ell = 1}^d x_\ell} = \rbraces{\Inv{\phi_1} \circ f} \rbraces{\sum\limits_{\ell = 1}^d x_\ell}
                                                          \geq \Inv{\phi_1}\rbraces{\sum\limits_{\ell = 1}^d f(x_\ell)}   \qquad \text{ for all } x_1, \ldots, x_d \geq M ~.
\end{equation*}
Since $\phi_2$ is strictly decreasing, $x_\ell \geq M$ implies $\Inv{\phi_2}(x_\ell) \leq \Inv{\phi}_2(M)$.
Therefore, for any $\boldm{u} \in B_\varepsilon(\boldm{0}) \cap [0, 1]^d$ with $\varepsilon := \Inv{\phi_2}(M) > 0$
and $x_\ell := \phi_2(u_\ell) \geq \phi_2(\varepsilon) = M$, we have
\begin{align*}
    \Inv{\phi_2}\rbraces{\sum\limits_{\ell = 1}^d \phi_2(u_\ell)} 
                                                    &= \Inv{\phi_2}\rbraces{\sum\limits_{\ell = 1}^d x_\ell}
                                                    \geq \Inv{\phi_1}\rbraces{\sum\limits_{\ell = 1}^d f(x_\ell)} \\
                                                    &= \Inv{\phi_1}\rbraces{\sum\limits_{\ell = 1}^d \rbraces{\phi_1 \circ \Inv{\phi_2}}(\phi_2(u_\ell))} 
                                                    = \Inv{\phi_1} \rbraces{\sum\limits_{\ell = 1}^d \phi_1 (u_\ell)} ~.  %\qquad \text{ for all } u \in B ~.
\end{align*}
This yields $C_1(\boldm{u}) \leq C_2(\boldm{u})$ for all $\boldm{u} \in B_\varepsilon(\boldm{0}) \cap [0, 1]^d$ and thus $C_1 \loc C_2$.
\end{proof}

Unfortunately, the subadditivity of $\phi_1 \circ \Inv{\phi_2}$ near $\infty$ is rather difficult to validate and we therefore present a sufficient criterion similar to Corollary~4.4.5 in \citet{Nelsen.2006}.

\begin{lemma} \label{prop:ndAg}
Let $C_1$ and $C_2$ be Archimedean $d$-copulas generated by $\phi_1$ and $\phi_2$, respectively. 
Then $(\phi_1 \circ \Inv{\phi_2})$ is subadditive near $\infty$ whenever $\frac{\phi_1}{\phi_2}$ is increasing on $(0, \varepsilon)$ for some $\varepsilon > 0$.
\end{lemma}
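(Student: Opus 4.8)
The plan is to reduce the localized subadditivity of $f := \phi_1 \circ \Inv{\phi_2}$ to a monotonicity statement about the ratio $f(x)/x$, and then to identify this ratio with $\phi_1/\phi_2$ read through the substitution $x = \phi_2(t)$. This is a tail version of the classical ``star-shaped implies subadditive'' mechanism underlying Corollary~4.4.5 in \citet{Nelsen.2006}, the point being that restricting to large arguments lets us dispense entirely with the value of $f$ at $0$.

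The first step I would isolate is the following elementary observation: if $h(x) := f(x)/x$ is non-increasing on a half-line $[M, \infty)$, then $f$ is subadditive near $\infty$ with the same constant $M$. Indeed, for $x, y \geq M$ we have $x + y \geq x \geq M$ and $x + y \geq y \geq M$, so $h(x+y) \leq h(x)$ and $h(x+y) \leq h(y)$; multiplying by $x$ and $y$, respectively, and adding gives
\begin{equation*}
	f(x+y) = x\,h(x+y) + y\,h(x+y) \leq x\,h(x) + y\,h(y) = f(x) + f(y) ~.
\end{equation*}
Notably, no condition at the origin is needed, which is exactly why the localized hypothesis suffices.

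Next I would compute the ratio explicitly. Since the generators are strict, $\Inv{\phi_2}$ is the genuine inverse, strictly decreasing, with $\Inv{\phi_2}(x) \searrow 0$ as $x \to \infty$. Substituting $t := \Inv{\phi_2}(x)$, equivalently $x = \phi_2(t)$, yields
\begin{equation*}
	\frac{f(x)}{x} = \frac{\phi_1(\Inv{\phi_2}(x))}{\phi_2(\Inv{\phi_2}(x))} = \frac{\phi_1(t)}{\phi_2(t)} = \rbraces{\frac{\phi_1}{\phi_2}} \rbraces{\Inv{\phi_2}(x)} ~,
\end{equation*}
so that $f(x)/x$ is the composition of the increasing function $\phi_1/\phi_2$ with the decreasing function $\Inv{\phi_2}$. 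Setting $M := \phi_2(\varepsilon)$, for every $x > M$ we have $\Inv{\phi_2}(x) \in (0, \varepsilon)$, where the hypothesis applies; as $x$ increases, $\Inv{\phi_2}(x)$ decreases, and hence $(\phi_1/\phi_2)(\Inv{\phi_2}(x))$ decreases. Thus $f(x)/x$ is non-increasing on $(M, \infty)$, and the first step delivers the subadditivity of $f$ near $\infty$.

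The only delicate point is the bookkeeping of monotonicity directions under the order-reversing map $\Inv{\phi_2}$, together with the observation that strictness of the generators is what guarantees $\Inv{\phi_2}(x) \to 0$ and hence that the relevant arguments eventually fall inside $(0, \varepsilon)$. Once these are tracked correctly, the argument is entirely elementary, and I expect no further obstacle.
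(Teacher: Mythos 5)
Your proposal is correct and follows essentially the same route as the paper: both identify $f(x)/x$ with $\phi_1/\phi_2$ evaluated at $\Inv{\phi_2}(x)$, deduce that this ratio is non-increasing on $[\phi_2(\varepsilon),\infty)$ from the order-reversal of $\Inv{\phi_2}$, and then obtain subadditivity by the standard weighted-average inequality $f(x+y)=(x+y)h(x+y)\leq xh(x)+yh(y)$. No gap to report.
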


\begin{proof}
Define $g: (0, \infty) \rightarrow (0, \infty)$ as $g(x) := \frac{f(x)}{x}$ and $f(x) := (\phi_1 \circ \Inv{\phi_2})(x)$ as before. 
We now verify that $g$ is decreasing on $(M, \infty)$ with $M := \phi_2(\varepsilon)$.
As $\phi_2$ is strictly decreasing for all $x \geq 0$, we have for $M \leq x \leq y$
\begin{equation*}
     0 < \Inv{\phi}_2(y) \leq \Inv{\phi}_2(x) \leq  \Inv{\phi}_2(M) = \varepsilon ~.
\end{equation*}
Combining this with the fact that $g \circ \phi_2 = \frac{\phi_1}{\phi_2}$ is increasing on $(0, \varepsilon)$, we have
\begin{equation*}
    g(x) = \rbraces{g \circ \phi_2} \rbraces{\Inv{\phi}_2(x)} \geq \rbraces{g \circ \phi_2} \rbraces{\Inv{\phi}_2(y)} = g(y) ~,
\end{equation*}
so $g$ is decreasing for $x, y \geq M$. 
Thus, for all $x, y \in [M, \infty)$
\begin{equation*}
    x \rbraces{g(x + y) - g(x)} + y \rbraces{g(x + y) - g(y)} \leq 0
\end{equation*} 
or equivalently
\begin{equation*}
    f(x + y) = (x + y)g(x + y) \leq xg(x) + yg(y) = f(x) + f(y) ~. 
\end{equation*}
Thus, $f$ is subadditive near $\infty$. % and $C_1 \loc C_2$ holds.  
\end{proof}

Up to this point, our approach was entirely independent of the tail dependence function of the Archimedean copulas.
To ensure the existence of a tail dependence function we will assume that the generators are regularly varying according to Definition~\ref{def:reg_var}.

We will show that ordered tail dependence functions result in ordered parameters of regular variation, allowing us to apply Lemma~\ref{prop:ndAg} for Archimedean copulas with regularly varying generators.

\begin{lemma}\label{lemma:archimedean_regular_varying}
Let $C_1$ and $C_2$ be Archimedean $d$-copulas with strict generators $\phi_1$ and $\phi_2$ which are regularly varying at $0$ with parameters $-\alpha_1$ and $-\alpha_2 \in [-\infty, 0]$, respectively.
Then the following are equivalent: 
\begin{enumerate}
	\item $C_1 \stdo C_2$ \label{lemma:archimedean_regular_varyingA}
	\item $\TDC(C_1)	< \TDC(C_2)$  \label{lemma:archimedean_regular_varyingB}
	\item $\alpha_1 < \alpha_2$. \label{lemma:archimedean_regular_varyingC}
\end{enumerate}
%Then $C_1 \stdo C_2$ implies $\alpha_1 < \alpha_2$. 
%Then $\alpha_1 < \alpha_2$ if $\TDL{w}{C_1} < \TDL{w}{C_2}$. 
\end{lemma}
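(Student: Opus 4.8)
The plan is to prove the cyclic chain $(\ref{lemma:archimedean_regular_varyingA}) \Rightarrow (\ref{lemma:archimedean_regular_varyingB}) \Rightarrow (\ref{lemma:archimedean_regular_varyingC}) \Rightarrow (\ref{lemma:archimedean_regular_varyingA})$, with the entire argument resting on a single monotonicity fact: for each fixed $\boldm{w} \in (0, \infty)^d$, the value $\TD{\boldm{w}}{C}$ supplied by Lemma~\ref{lma:regVarTD} is a \emph{strictly increasing} function of the regular-variation parameter $\alpha \in [0, \infty]$. Granting this, the implication $(\ref{lemma:archimedean_regular_varyingA}) \Rightarrow (\ref{lemma:archimedean_regular_varyingB})$ is immediate, since $C_1 \stdo C_2$ means $\TD{\boldm{w}}{C_1} < \TD{\boldm{w}}{C_2}$ for all $\boldm{w} \in (0, \infty)^d$ and in particular for $\boldm{w} = \boldm{1}$, giving $\lambda(C_1) = \TD{\boldm{1}}{C_1} < \TD{\boldm{1}}{C_2} = \lambda(C_2)$.

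The monotonicity engine is the classical behaviour of $\ell^p$-norms. For $\alpha \in (0, \infty)$ I would set $x_k := w_k^{-1} > 0$, so that Lemma~\ref{lma:regVarTD} reads $\TD{\boldm{w}}{C} = \rbraces{\sum_{k=1}^d x_k^\alpha}^{-1/\alpha} = \norm{x}_\alpha^{-1}$, the reciprocal of the $\ell^\alpha$-norm of the vector $x = (x_1, \ldots, x_d)$. Because $\boldm{w} \in (0, \infty)^d$ with $d \geq 2$, this vector has at least two strictly positive coordinates, so the strict monotonicity of $p$-norms gives that $\alpha \mapsto \norm{x}_\alpha$ is strictly decreasing, whence $\alpha \mapsto \norm{x}_\alpha^{-1} = \TD{\boldm{w}}{C}$ is strictly increasing on $(0, \infty)$. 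The two degenerate endpoints glue on consistently: as $\alpha \searrow 0$ one has $\sum_k x_k^\alpha \to d$ and hence $\TD{\boldm{w}}{C} \searrow 0 = \TD{\boldm{w}}{C}\big|_{\alpha = 0}$, while as $\alpha \to \infty$ one has $\norm{x}_\alpha \searrow \norm{x}_\infty = \max_k x_k = (\min_k w_k)^{-1}$, so $\TD{\boldm{w}}{C} \nearrow \min_k w_k$, matching the $\alpha = \infty$ value. Both limits are approached strictly, yielding strict monotonicity on the full range $[0, \infty]$.

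With this fact established, the remaining implications follow directly. Applying the monotonicity at $\boldm{w} = \boldm{1}$ gives $(\ref{lemma:archimedean_regular_varyingB}) \Rightarrow (\ref{lemma:archimedean_regular_varyingC})$: the map $\alpha \mapsto \lambda = \TD{\boldm{1}}{C}$ is strictly increasing (explicitly equal to $d^{-1/\alpha}$ on $(0, \infty)$, with $0$ and $1$ at the endpoints), so $\lambda(C_1) < \lambda(C_2)$ forces $\alpha_1 < \alpha_2$. Applying the monotonicity simultaneously at every $\boldm{w} \in (0, \infty)^d$ gives $(\ref{lemma:archimedean_regular_varyingC}) \Rightarrow (\ref{lemma:archimedean_regular_varyingA})$: if $\alpha_1 < \alpha_2$ then $\TD{\boldm{w}}{C_1} < \TD{\boldm{w}}{C_2}$ for all such $\boldm{w}$, which is precisely $C_1 \stdo C_2$. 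This closes the cycle.

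The main obstacle—indeed the only nontrivial point—is making the strict monotonicity of $\alpha \mapsto \TD{\boldm{w}}{C}$ rigorous across the full extended range $[0, \infty]$, including the two degenerate endpoints, and ensuring that strictness genuinely holds rather than mere monotonicity. Strictness is exactly where the hypothesis $d \geq 2$ enters: it guarantees that $x = (w_1^{-1}, \ldots, w_d^{-1})$ has at least two positive coordinates, so the $\ell^p$-norm is strictly (not just weakly) decreasing in $p$; for $d = 1$ the norm would be constant in $p$ and the whole equivalence would collapse. The interior monotonicity is the standard $\ell^p$-norm inequality, so the care is concentrated in the limiting identifications at $\alpha = 0$ and $\alpha = \infty$.
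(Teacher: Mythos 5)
Your proof is correct, and it follows the same cyclic skeleton \ref{lemma:archimedean_regular_varyingA}$\implies$\ref{lemma:archimedean_regular_varyingB}$\implies$\ref{lemma:archimedean_regular_varyingC}$\implies$\ref{lemma:archimedean_regular_varyingA} as the paper; the first two implications are handled essentially identically, since the paper's case analysis on the values $0$, $d^{-1/\alpha}$ and $1$ of $\TDC(C)$ is just your strict monotonicity of $\alpha \mapsto d^{-1/\alpha}$ on $[0,\infty]$ written out case by case. The genuine difference lies in \ref{lemma:archimedean_regular_varyingC}$\implies$\ref{lemma:archimedean_regular_varyingA}: the paper disposes of it by citing an external example from the literature, whereas you prove it from scratch by rewriting the formula of Lemma~\ref{lma:regVarTD} as $\TD{\boldm{w}}{C} = \norm{(w_1^{-1},\ldots,w_d^{-1})}_\alpha^{-1}$ and invoking the strict decrease of $\ell^p$-norms in $p$ for vectors with at least two nonzero coordinates, together with the strict limiting identifications at $\alpha = 0$ and $\alpha = \infty$. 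Both endpoint checks are sound: for $\alpha \searrow 0$ one has $\sum_k x_k^\alpha \to d > 1$ so the expression tends to $0$ while remaining positive for every $\alpha > 0$, and for finite $\alpha$ one has $\sum_k x_k^\alpha > (\max_k x_k)^\alpha$, which gives the strict gap to the $\alpha = \infty$ value. Your version buys a self-contained, elementary proof at the cost of a little extra care at the endpoints, and your remark that strictness is precisely where $d \geq 2$ enters identifies the one place where the argument could otherwise silently fail.
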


\begin{proof}
The first implication \ref{lemma:archimedean_regular_varyingA}$\implies$\ref{lemma:archimedean_regular_varyingB} is immediate.
For \ref{lemma:archimedean_regular_varyingB}$\implies$\ref{lemma:archimedean_regular_varyingC}, we deal with all the possible cases seperately.
\begin{enumerate}
	\item If $\TDC(C_1) = 0$ and $\TDC(C_2) \in (0, 1)$, then
	\begin{equation*}
        0 = \lambda(C_1) < \lambda(C_2) = d^{-\frac{1}{\alpha_2}} %\Leftrightarrow -\infty < -\frac{1}{\alpha_2} < 0 \Leftrightarrow \infty > \frac{1}{\alpha_2} > 0
    \end{equation*}
    and thus $\alpha_2 > 0 = \alpha_1$ holds. 
    \item If both $\TDC(C_1)$ and $\TDC(C_2)$ take values in $(0, 1)$, then $\alpha_1 < \alpha_2$ due to
    \begin{equation*}
    d^{-\frac{1}{\alpha_1}} = \lambda(C_1) < \lambda(C_2) = d^{-\frac{1}{\alpha_2}} %\Leftrightarrow 1 < d^{-\frac{1}{\alpha_2} + \frac{1}{\alpha_1}} = d^{\frac{1}{\alpha_1} - \frac{1}{\alpha_2}}
                                                                                    %\Leftrightarrow \frac{1}{\alpha_2} < \frac{1}{\alpha_1} ~.
    \end{equation*}
    \item If $\TDC(C_1) \in (0, 1)$ and $\TDC(C_2) = 1$, then
    \begin{equation*}
        d^{-\frac{1}{\alpha_1}} = \lambda(C_1) < \lambda(C_2) = 1 % = d^0 %\Leftrightarrow - \frac{1}{\alpha_1} < 0 \Leftrightarrow \frac{1}{\alpha_1} > 0
     \end{equation*}
    and thus $\alpha_1 < \infty = \alpha_2$. 
    \item Lastly, if $\TDC(C_1) = 0$ and $\TDC(C_2) = 1$, then $a_1 = 0 < \infty = a_2$.  
\end{enumerate}
The last implication \ref{lemma:archimedean_regular_varyingC}$\implies$\ref{lemma:archimedean_regular_varyingA} follows from \citet[Ex.~3.8]{Li.2013}.
\end{proof}

\begin{proof}[Proof of Theorem~\ref{thm:archimedean_loc_loo}]
To show $C_1 \loc C_2$, we will invoke Lemma~\ref{prop:ndAg} and show that $\psi(x) := \frac{\phi_1(x)}{\phi_2(x)}$ is increasing in a neighbourhood of 0.
An application of Lemma~\ref{prop:loc_ult_subadditive} then yields the desired result. 
Due to Lemma~\ref{lemma:archimedean_regular_varying}, we have $\alpha_1 < \alpha_2$, where $\phi_i$ is regularly varying with coefficient $-\alpha_i$ in $0$. % and thus $\psi$ is regular varying at zero with $\alpha_2 - \alpha_1 > 0$.
Furthermore, as $\phi_1$ and $\phi_2$ are positive, convex, and regularly varying functions, \citet[Lemma~A.1]{Charpentier.2009} yields
\begin{equation*}
    \lim\limits_{s \searrow 0} \frac{s \phi_i'(s)}{\phi_i(s)} = -\alpha_i %\text{ and } -\lim\limits_{s \searrow 0} \frac{s \phi_2'(s)}{\phi_2(s)} = \alpha_2 ~.
\end{equation*} 
for $i = 1,2$.
Here, $\phi_i'$ denotes an increasing representative of the derivative of $\phi_i$, which is only defined almost everywhere.
$\psi$ is almost everywhere differentiable as the ratio of continuous and almost everywhere differentiable functions, where 
\begin{equation*}
	\frac{\phi_1'(s) \phi_2(s) - \phi_1(s) \phi_2'(s)}{\phi_2(s)^2}
\end{equation*}
is a representative of the derivative of $\psi$ which we will denote by $\psi'$. 
This implies
\begin{align*}
    \lim\limits_{s \searrow 0} \frac{s \psi'(s)}{\psi(s)}	%&= \lim\limits_{s \searrow 0} \frac{s \frac{f'(s) g(s) - f(s) g'(s)}{g(s)^2}}{\frac{f(s)}{g(s)}} 
                                                    		&= \lim\limits_{s \searrow 0} \rbraces{s \frac{\phi_1'(s) \phi_2(s) - \phi_1(s) \phi_2'(s)}{\phi_2(s)^2}} \frac{\phi_2(s)}{\phi_1(s)} \\
                                                    		%&= \lim\limits_{s \searrow 0} \frac{s f'(s) g(s)}{g(s)^2} \frac{g(s)}{f(s)}  - \frac{sf(s) g'(s)}{g(s)^2} \frac{g(s)}{f(s)}
                                                    		&= \lim\limits_{s \searrow 0} \frac{s \phi_1'(s)}{\phi_1(s)}  - \frac{s\phi_2'(s)}{\phi_2(s)}
                                                    		= -\alpha_1 - (-\alpha_2)
                                                    		= \alpha_2 - \alpha_1 > 0 ~.
\end{align*}

Due to $\psi \geq 0$, $\psi'$ must be positive on $(0, \varepsilon)$ for some $\varepsilon > 0$. 
Finally, given $x_1, x_2 \in (0, \varepsilon)$ with $x_1 \leq x_2$, we have that $\phi_1$ and $\phi_2$ are absolutely continuous on $[x_1, x_2]$. 
This yields that $\psi$ as the ratio of absolutely continuous functions on $[x_1, x_2]$ is absolutely continuous on $[x_1, x_2]$ and therefore increasing on $(0, \varepsilon)$ due to
\begin{equation*}
	\psi(x_2) - \psi(x_1)	= \int\limits_{x_1}^{x_2} \psi'(s) \; \mathrm{d} s ~. \qedhere % \cInt{\psi'(s)}{x_1}{x_2}{s} \geq 0 ~. \qedhere
\end{equation*} 
%As $\phi_1$ and $\phi_2$ are absolutely continuous on every neighbourhood of zero, so is $\psi$. 
%Thus, for $x_1$ and $x_2$ $\psi$ in increasing on a neighbourhood of zero. 
\end{proof}

Finally, we point out that Theorem~\ref{thm:archimedean_loc_loo} also yields an approach to establish ordering results for multivariate copulas using Archimedean $d$-copulas as building blocks; the resulting copulas are called \emph{hierarchical (or nested) Archimedean copulas} (see, e.g., \citet{McNeil.2008} and \citet{Okhrin.2013}).

The following example is meant as an illustration of the general approach by dealing with the simplest possible hierarchical Archimedean copula. 

\tikzset{
    %Define standard arrow tip
    >=stealth',
    %Define style for boxes
    punkt/.style={
           rectangle,
           rounded corners,
           draw=black, thick,
           text width=2em,
           minimum height=2em,
           text centered},
    % Define arrow style
    pil/.style={
           <-,
           thick,
           shorten <=2pt,
           shorten >=2pt,}
}

\begin{example}
Consider the hierarchical Archimedean $3$-copulas 
\begin{equation*}
    C(u_1, u_2, u_3) := C_1(u_1, C_2(u_2, u_3)) \text{\quad and \quad} D(u_1, u_2, u_3) := D_1(u_1, D_2(u_2, u_3))
\end{equation*}
where $C_i$ and $D_i$ are Archimedean $2$-copulas with regularly varying generators.
The structure of the hierarchical Archimedean copulas $C$ and $D$ is visualized in Figure~\ref{fig:nested-archimedean} which also suggests an intuitive way to establish our local stochastic ordering, namely by comparing the respective nodes $C_i$ and $D_i$ of $C$ and $D$ separately.
\begin{figure}
\begin{center}
\begin{subfigure}[b]{0.5\textwidth}
\centering
\begin{tikzpicture}[node distance=1cm, auto, punkt]
\node[punkt] (copula) {$C_1$};
\node[punkt, below left=of copula] (var1) {$u_1$}
edge[pil] (copula.south west);
\node[punkt, below right=of copula] (nest1) {$C_2$}
edge[pil] (copula.south east);
\node[punkt, below left=of nest1] (var2) {$u_2$}
edge[pil] (nest1.south west);
\node[punkt, below right=of nest1] (nest2) {$u_3$}
edge[pil] (nest1.south east);
\end{tikzpicture}
\end{subfigure}%
\begin{subfigure}[b]{0.5\textwidth}
\centering
\begin{tikzpicture}[node distance=1cm, auto, punkt]
\node[punkt] (copula) {$D_1$};
\node[punkt, below left=of copula] (var1) {$u_1$}
edge[pil] (copula.south west);
\node[punkt, below right=of copula] (nest1) {$D_2$}
edge[pil] (copula.south east);
\node[punkt, below left=of nest1] (var2) {$u_2$}
edge[pil] (nest1.south west);
\node[punkt, below right=of nest1] (nest2) {$u_3$}
edge[pil] (nest1.south east);
\end{tikzpicture}
\end{subfigure}
\caption{The hierarchical Archimedean $3$-copulas $C$ and $D$.}
\label{fig:nested-archimedean}
\end{center}
\end{figure}
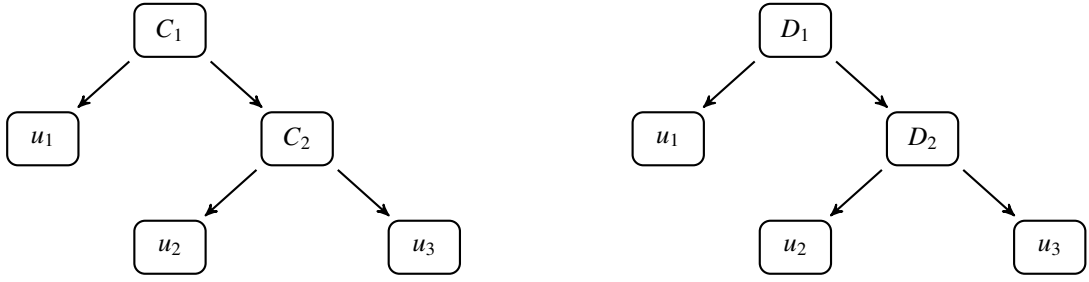
For this let us assume that
\begin{equation} \label{eqn:hac_ordering}
    \lim\limits_{w_i \rightarrow \infty} \TDL{\boldm{w}}{C} < \lim\limits_{w_i \rightarrow \infty} \TDL{\boldm{w}}{D}
\end{equation}
for all $w = (w_1, w_2, w_3) \in \mathbb{R}_+^3$ and $i\in \{ 1,2,3 \}$, which is just a slightly stronger assumption than our usual condition $C \stdo D$.  We claim that $$ C \loc D ~. $$
For this, we use the representation of $\TDL{\wc}{C_i}$ and $\TDL{\wc}{D_i}$ given in Lemma~\ref{lma:regVarTD} and find
\begin{equation*}
    \lim\limits_{w_1 \rightarrow \infty} \TDL{(w_1, w_2, w_3)}{C} = \TDL{(w_2, w_3)}{C_2}  
    \text{ and }
    \lim\limits_{w_1 \rightarrow \infty} \TDL{(w_1, w_2, w_3)}{D} = \TDL{(w_2, w_3)}{D_2} ~.
\end{equation*}
Combining \eqref{eqn:hac_ordering} with Theorem~\ref{thm:archimedean_loc_loo} yields $C_2 \loc D_2$.
Similarly, we obtain $C_1 \loc D_1$. Then, using the monotonicity of $v \mapsto C_1(u, v)$ for all $u$ we conclude that
\begin{align*}
    C(u_1, u_2, u_3)    &= C_1(u_1, C_2(u_2, u_3)) 
                        \leq C_1(u_1, D_2(u_2, u_3)) \\
                        &\leq D_1(u_1, D_2(u_2, u_3)) 
                        = D(u_1, u_2, u_3) 
\end{align*}
for all $(u_1, u_2, u_3)$ near $\bm{0}$, so that indeed we have $C \loc D$.
\end{example}

\section{Implications for risk management\label{sec:6}}

In addition to the theoretical results presented in the previous sections, we are going to illustrate their relevance by pointing out an implication to the study of extremal events. Consider, for instance, the returns of a portfolio comprised of various financial assets, or the water levels of different reservoirs (or rivers). Extremal events in these situations describe large joint losses in the portfolio or dangerously low (or high) water levels at the same time.

Traditionally, the probability of extremal events is measured by the tail dependence coefficient $\lambda(\boldm{X})$ where $\boldm{X}$ is the random vector consisting of the corresponding returns or water levels, respectively. A low tail dependence coefficient indicates a low probability of such worst-case scenarios, and that has always been interpreted as indicating low risk as well. However, as heuristically convincing as this connection may seem, there is only little evidence substantiating it (see, e.g., \citet{Embrechts.2005, Kaas.2009, Rueschendorf.2012}).

It is crucial to note in this context that tail dependence is defined in terms of the directional \emph{derivative} of the copula at zero, whereas the classical measures of risk---such as Value-at-Risk or Expected Shortfall---depend on the quantiles of the joint distribution function, i.e., the \emph{values} of the underlying copula. In view of Example~\ref{example:eo_doesnotimply_loo}, however, these two notions do not imply each other, and it is not possible in general to infer a low(er) risk from a low(er) tail dependence since the two orders $\tdo$ and $\loc$ are not equivalent.

A link between the values and the tail dependence coefficient has been studied in \citet{Schmid.2007} where it is shown that a localized version of Spearman's $\rho$, defined as $$ \rho(p \, ; C) := a(p) \int\limits_{[0,p]^d} C(\bm{u}) \, \textrm{d}\bm{u} + b(p) $$ with normalizing constants $a(p)$ and $b(p)$, satisfies $$ \TDC(C) \leq \lim\limits_{p\to 0} \rho(p \, ; C) = (d+1) \int\limits_{[0,1]^d} \TDL{\bm{w}}{C} \textrm{d}\bm{w} $$ if the limits exists. But, being a limiting result, this cannot be used to derive any risk estimates in terms of the tail dependence function.

However, in view of our results in Section~\ref{section:classes_tail_behaviour}, we finally are able to conclude that a smaller tail dependence function implies a smaller risk of extremal events, at least for all the classes of copulas discussed in Section~\ref{section:classes_tail_behaviour}. Of particular importance for practical applications are the Archimedean copulas since, e.g., many higher dimensional models in financial mathematics are built upon such copulas, and it is common belief that Archimedean copulas provide particularly good models for financial data (see \citet{Buecher.2012} and \citet{Trede.2013}).

\section*{Acknowledgements}

We thank the two anonymous referees for their constructive comments and enlightening remarks which helped us to improve the article.
The second author gratefully acknowledges financial support from the German Academic Scholarship Foundation.

\bibliographystyle{myjmva}
\bibliography{reference}

\begin{thebibliography}{27}
\expandafter\ifx\csname natexlab\endcsname\relax\def\natexlab#1{#1}\fi
\providecommand{\bibinfo}[2]{#2}
\ifx\xfnm\relax \def\xfnm[#1]{\unskip,\space#1}\fi
%Type = Book
\bibitem[{Bingham et~al.(1987)Bingham, Goldie and Teugels}]{Bingham.1987}
\bibinfo{author}{N.~H. Bingham}, \bibinfo{author}{C.~M. Goldie},
  \bibinfo{author}{J.~L. Teugels}, \bibinfo{title}{Regular Variation},
  \bibinfo{publisher}{{Cambridge University Press}},
  \bibinfo{address}{Cambridge}, \bibinfo{year}{1987}.
%Type = Article
\bibitem[{B\"{u}cher et~al.(2012)B\"{u}cher, Dette and
  Volgushev}]{Buecher.2012}
\bibinfo{author}{A.~B\"{u}cher}, \bibinfo{author}{H.~Dette},
  \bibinfo{author}{S.~Volgushev}, \bibinfo{title}{A test for archimedeanity in
  bivariate copula models}, \bibinfo{journal}{J. Multivariate Anal.}
  \bibinfo{volume}{110} (\bibinfo{year}{2012}) \bibinfo{pages}{121--132}.
%Type = Article
\bibitem[{Charpentier and Segers(2009)}]{Charpentier.2009}
\bibinfo{author}{A.~Charpentier}, \bibinfo{author}{J.~Segers},
  \bibinfo{title}{Tails of multivariate {A}rchimedean copulas},
  \bibinfo{journal}{J. Multivariate Anal.} \bibinfo{volume}{100}
  (\bibinfo{year}{2009}) \bibinfo{pages}{1521--1537}.
%Type = Article
\bibitem[{Durante(2006)}]{Durante.2006}
\bibinfo{author}{F.~Durante}, \bibinfo{title}{A new class of symmetric
  bivariate copulas}, \bibinfo{journal}{J. Nonparametr. Stat.}
  (\bibinfo{year}{2006}) \bibinfo{pages}{499--510}.
%Type = Article
\bibitem[{Durante et~al.(2008)Durante, Kolesárová, Mesiar and
  Sempi}]{Durante.2008b}
\bibinfo{author}{F.~Durante}, \bibinfo{author}{A.~Kolesárová},
  \bibinfo{author}{R.~Mesiar}, \bibinfo{author}{C.~Sempi},
  \bibinfo{title}{Semilinear copulas}, \bibinfo{journal}{Fuzzy Sets and
  Systems} \bibinfo{volume}{159} (\bibinfo{year}{2008})
  \bibinfo{pages}{63--76}.
%Type = Book
\bibitem[{Durante and Sempi(2016)}]{Durante.2015}
\bibinfo{author}{F.~Durante}, \bibinfo{author}{C.~Sempi},
  \bibinfo{title}{Principles of Copula Theory}, \bibinfo{publisher}{{CRC
  Press}}, \bibinfo{address}{Boca Raton}, \bibinfo{year}{2016}.
%Type = Article
\bibitem[{Embrechts et~al.(2005)Embrechts, Höing and
  Puccetti}]{Embrechts.2005}
\bibinfo{author}{P.~Embrechts}, \bibinfo{author}{A.~Höing},
  \bibinfo{author}{G.~Puccetti}, \bibinfo{title}{Worst var scenarios},
  \bibinfo{journal}{Insurance Math. Econom.} \bibinfo{volume}{37}
  (\bibinfo{year}{2005}) \bibinfo{pages}{115--134}.
%Type = Inproceedings
\bibitem[{Gudendorf and Segers(2010)}]{Gudendorf.2010}
\bibinfo{author}{G.~Gudendorf}, \bibinfo{author}{J.~Segers},
  \bibinfo{title}{Extreme-value copulas}, in:  \cite{CopulaTheory.2010}, pp.
  \bibinfo{pages}{127--145}.
%Type = Phdthesis
\bibitem[{Hua(2012)}]{Hua.2012}
\bibinfo{author}{L.~Hua}, \bibinfo{title}{Multivariate extremal dependence and
  risk measures}, Ph.D. thesis, University of British Columbia,
  \bibinfo{year}{2012}.
%Type = Article
\bibitem[{Jaworski(2006)}]{Jaworski.2006}
\bibinfo{author}{P.~Jaworski}, \bibinfo{title}{On uniform tail expansions of
  multivariate copulas and wide convergence of measures},
  \bibinfo{journal}{Appl. Math.} \bibinfo{volume}{33} (\bibinfo{year}{2006})
  \bibinfo{pages}{159--184}.
%Type = Inproceedings
\bibitem[{Jaworski(2010)}]{Jaworski.2010b}
\bibinfo{author}{P.~Jaworski}, \bibinfo{title}{Tail behaviour of copulas}, in:
  \cite{CopulaTheory.2010}, pp. \bibinfo{pages}{161--186}.
%Type = Book
\bibitem[{Jaworski et~al.(2010)Jaworski, Durante, H{\"a}rdle and
  Rychlik}]{CopulaTheory.2010}
\bibinfo{editor}{P.~Jaworski}, \bibinfo{editor}{F.~Durante},
  \bibinfo{editor}{W.~K. H{\"a}rdle}, \bibinfo{editor}{T.~Rychlik} (Eds.),
  \bibinfo{title}{Copula Theory and Its Applications},
  \bibinfo{publisher}{Springer}, \bibinfo{address}{Heidelberg},
  \bibinfo{year}{2010}.
%Type = Book
\bibitem[{Joe(1997)}]{Joe.1997}
\bibinfo{author}{H.~Joe}, \bibinfo{title}{Multivariate Models and Multivariate
  Dependence Concepts}, \bibinfo{publisher}{Chapman and Hall},
  \bibinfo{address}{London}, \bibinfo{year}{1997}.
%Type = Article
\bibitem[{Kaas et~al.(2009)Kaas, J.A. and Nelsen}]{Kaas.2009}
\bibinfo{author}{R.~Kaas}, \bibinfo{author}{L.~J.A.},
  \bibinfo{author}{R.~Nelsen}, \bibinfo{title}{Worst var scenarios with given
  marginals and measures of association}, \bibinfo{journal}{Insurance Math.
  Econom.} \bibinfo{volume}{44} (\bibinfo{year}{2009})
  \bibinfo{pages}{146--158}.
%Type = Article
\bibitem[{Koike et~al.(2022)Koike, Kato and Hofert}]{Koike.2022}
\bibinfo{author}{T.~Koike}, \bibinfo{author}{S.~Kato},
  \bibinfo{author}{M.~Hofert}, \bibinfo{title}{Measuring non-exchangeable tail
  dependence using tail copulas}, \bibinfo{journal}{arXiv:2101.12262}
  (\bibinfo{year}{2022}).
%Type = Article
\bibitem[{K{\"o}nig(2003)}]{Konig.2003}
\bibinfo{author}{H.~K{\"o}nig}, \bibinfo{title}{The (sub/super)additivity
  assertion of {C}hoquet}, \bibinfo{journal}{Studia Math.}
  \bibinfo{volume}{157} (\bibinfo{year}{2003}) \bibinfo{pages}{171--197}.
%Type = Incollection
\bibitem[{Li(2013)}]{Li.2013}
\bibinfo{author}{H.~Li}, \bibinfo{title}{Dependence comparison of multivariate
  extremes via stochastic tail orders}, in: \bibinfo{editor}{H.~Li},
  \bibinfo{editor}{X.~Li} (Eds.), \bibinfo{booktitle}{Stochastic Orders in
  Reliability and Risk}, \bibinfo{publisher}{Springer}, \bibinfo{address}{New
  York}, \bibinfo{year}{2013}, pp. \bibinfo{pages}{363--387}.
%Type = Article
\bibitem[{McNeil(2008)}]{McNeil.2008}
\bibinfo{author}{A.~J. McNeil}, \bibinfo{title}{Sampling nested archimedean
  copulas}, \bibinfo{journal}{J. Stat. Comput. Simul.} \bibinfo{volume}{78}
  (\bibinfo{year}{2008}) \bibinfo{pages}{567--581}.
%Type = Book
\bibitem[{Nelsen(2006)}]{Nelsen.2006}
\bibinfo{author}{R.~B. Nelsen}, \bibinfo{title}{An Introduction to Copulas},
  \bibinfo{publisher}{Springer}, \bibinfo{address}{New York},
  \bibinfo{edition}{2nd} edition, \bibinfo{year}{2006}.
%Type = Article
\bibitem[{Okhrin et~al.(2013)Okhrin, Okhrin and Schmid}]{Okhrin.2013}
\bibinfo{author}{O.~Okhrin}, \bibinfo{author}{Y.~Okhrin},
  \bibinfo{author}{W.~Schmid}, \bibinfo{title}{Properties of hierarchical
  archimedean copulas}, \bibinfo{journal}{Stat. Risk Model.}
  \bibinfo{volume}{30} (\bibinfo{year}{2013}) \bibinfo{pages}{21--54}.
%Type = Article
\bibitem[{Rüschendorf(2012)}]{Rueschendorf.2012}
\bibinfo{author}{L.~Rüschendorf}, \bibinfo{title}{Worst case portfolio vectors
  and diversification effects}, \bibinfo{journal}{Finance Stoch.}
  \bibinfo{volume}{16} (\bibinfo{year}{2012}) \bibinfo{pages}{155--175}.
%Type = Article
\bibitem[{Schmid and Schmidt(2007)}]{Schmid.2007}
\bibinfo{author}{F.~Schmid}, \bibinfo{author}{R.~Schmidt},
  \bibinfo{title}{Multivariate conditional versions of {S}pearman's rho and
  related measures of tail dependence}, \bibinfo{journal}{J. Multivariate
  Anal.} \bibinfo{volume}{98} (\bibinfo{year}{2007})
  \bibinfo{pages}{1123--1140}.
%Type = Book
\bibitem[{Scholtes(2012)}]{Scholtes.2012}
\bibinfo{author}{S.~Scholtes}, \bibinfo{title}{Introduction to Piecewise
  Differentiable Equations}, \bibinfo{publisher}{Springer},
  \bibinfo{address}{New York}, \bibinfo{year}{2012}.
%Type = Article
\bibitem[{Siburg and Stoimenov(2008)}]{Siburg.2008b}
\bibinfo{author}{K.~F. Siburg}, \bibinfo{author}{P.~A. Stoimenov},
  \bibinfo{title}{Gluing copulas}, \bibinfo{journal}{Commun. Statist. Theory
  Methods} \bibinfo{volume}{37} (\bibinfo{year}{2008})
  \bibinfo{pages}{3124--3134}.
%Type = Article
\bibitem[{Siburg et~al.(2022)Siburg, Strothmann and Weiß}]{Siburg.2022a}
\bibinfo{author}{K.~F. Siburg}, \bibinfo{author}{C.~Strothmann},
  \bibinfo{author}{G.~N.~F. Weiß}, \bibinfo{title}{Comparing and quantifying
  tail dependence}, \bibinfo{journal}{arXiv:2208.10319}
  (\bibinfo{year}{2022}).
%Type = Article
\bibitem[{Sibuya(1960)}]{Sibuya.1960}
\bibinfo{author}{M.~Sibuya}, \bibinfo{title}{Bivariate extreme statistics, i},
  \bibinfo{journal}{Ann. Inst. Stat. Math. Tokyo} \bibinfo{volume}{11}
  (\bibinfo{year}{1960}) \bibinfo{pages}{195--210}.
%Type = Article
\bibitem[{Trede and Savu(2013)}]{Trede.2013}
\bibinfo{author}{M.~Trede}, \bibinfo{author}{C.~Savu}, \bibinfo{title}{Do stock
  returns have an archimedean copula?}, \bibinfo{journal}{J. Appl. Stat.}
  \bibinfo{volume}{40} (\bibinfo{year}{2013}) \bibinfo{pages}{1764--1778}.

\end{thebibliography}

\end{document}